\documentclass[letterpaper,10pt,twoside]{article}

\usepackage{graphicx}
\usepackage{amsmath,amssymb,amsthm, verbatim}
\usepackage{enumerate}
\usepackage{mathrsfs}

\newtheorem{prop}{Proposition}
\newtheorem{thm}[prop]{Theorem}

\newtheorem{lem}[prop]{Lemma}

\newcommand{\two}{2}
\newcommand{\defeq}{:=}
\newcommand{\Q}{\mathbb{Q}}
\newcommand{\card}{\mbox{card}}
\newcommand{\aut}{\mbox{Aut}}
\newcommand{\autv}{\mbox{Aut}_{\text{vertex}}}
\newcommand{\aute}{\mbox{Aut}_{\text{edge}}}
\newcommand{\ii}{\mbox{id}}
\newcommand{\graph}{G}
\newcommand{\graphH}{H}
\newcommand{\legs}{e}
\newcommand{\Vb}{W_{{\tiny\mbox{biconn}}}}
\newcommand{\Ve}{W_{{\tiny\mbox{$\two$-edge}}}}
\newcommand{\Vc}{W_{{\tiny\mbox{conn}}}}
\newcommand{\Vd}{W_{{\tiny\mbox{disconn}}}}
\newcommand{\multiedges}{\rho}
\newcommand{\lp}{k}
\newcommand{\vertex}{n}
\newcommand{\edge}{m}
\newcommand{\ext}{s}
\newcommand{\Qi}{q_{i}^{(\rho)}}

\newcommand{\Qti}{{q_i}^{(\rho)}_{\ge 1}}
\newcommand{\Qtifac}{{\hat{q_i}}^{(\rho)}_{\ge 1}}
\newcommand{\Qcut}{\hat{q}^{(\rho)}_{{{\tiny\mbox{cutvertex}}}_{\ge 1}}}
\newcommand{\Eij}{l_{i,j}}
\newcommand{\Ein}{l_{i,n+1}}

\newcommand{\ri}{r_i}
\newcommand{\rifac}{\hat{r_i}}
\newcommand{\rj}{r_{j}}
\newcommand{\rbox}{r_{{\tiny\mbox{cutvertex}}}}
\newcommand{\rboxfac}{\hat{r}_{{\tiny\mbox{cutvertex}}}}

\newcommand{\shift}{\Xi_i}
\newcommand{\shiftj}{\Xi_j}
\newcommand{\si}{s_i}
\newcommand{\sione}{{s_i}_{\ge 1}}
\newcommand{\sifac}{{\hat{s_i}}_{\ge 1}}
\newcommand{\contraction}{c_{\hat{\graph}}}
\newcommand{\Ext}{E_{{\tiny\mbox{ext}}}}
\newcommand{\Int}{E_{{\tiny\mbox{int}}}}
\newcommand{\Exth}{\hat{E}_{{\tiny\mbox{ext}}}}
\newcommand{\Inth}{\hat{E}_{{\tiny\mbox{int}}}}

\newcommand{\legsii}{\mathcal{E}_{{\tiny\mbox{int}},i}}
\newcommand{\legsiifac}{\hat{\mathcal{E}}_{{\tiny\mbox{int}},i}}

\newcommand{\edgesii}{\mathcal{L}_{{\tiny\mbox{int}},i}}

\newcommand{\edgesiia}{\mathcal{L}_{{\tiny\mbox{int}},i,a}}
\newcommand{\edgesiil}{\mathcal{L}_{{\tiny\mbox{int}},l}}
\newcommand{\edgesiiB}{\mathcal{L}_{{\tiny\mbox{int}}}^\mathcal{B}}
\newcommand{\edgesie}{\mathcal{L}_{{\tiny\mbox{ext}},i}}
\newcommand{\edgese}{\mathcal{L}_{{\tiny\mbox{ext}}}}

\newcommand{\partii}{\mathcal{I}_{{\tiny\mbox{int}}}}
\newcommand{\partiifac}{\hat{\mathcal{I}}_{{\tiny\mbox{int}}}}

\newcommand{\partiib}{\mathcal{I}_{\mathcal{B}_i}}
\newcommand{\legsfree}{\mathcal{E}_{{\tiny\mbox{free}}}}
\newcommand{\mapi}{\varphi_{{\tiny\mbox{int}}}}
\newcommand{\mape}{\varphi_{{\tiny\mbox{ext}}}}
\newcommand{\mapeii}{\varphi_{{\tiny\mbox{ext}}}^*}
\newcommand{\mapih}{\hat{\varphi}_{{\tiny\mbox{int}}}}
\newcommand{\mapeh}{\hat{\varphi}_{{\tiny\mbox{ext}}}}

\newcommand{\Dc}{\beta_{{\tiny\mbox{conn}}}}
\newcommand{\Db}{\beta_{{\tiny\mbox{biconn}}}}
\newcommand{\De}{\beta_{{\tiny\mbox{$\two$-edge}}}}
\newcommand{\Dec}{\beta_{{\tiny\mbox{$\two$-edge}},C}}

\newcommand{\Daux}{\beta}

\newcommand{\extb}{\xi}
\newcommand{\extc}{\psi}
\newcommand{\A}{\mathscr{A}}
\newcommand{\B}{\mathscr{B}}
\newcommand{\C}{\mathscr{C}}
\newcommand{\D}{\mathscr{D}}
\newcommand{\CC}{\mathcal{C}}
\newcommand{\BB}{\mathcal{B}}

\newcommand{\chietprime}{[\chi_e^{t'}]^2}
\newcommand{\chies}{[\chi_e^{2s}]^2}
\newcommand{\chieinvtprime}{[{\chi_e^{t'}}^{-1}]^2}
\newcommand{\chivu}{[\chi_v^{\uu}]^2}
\newcommand{\chiet}{[\chi_e^{t}]^2}
\newcommand{\chivi}{[\chi_v^i]^2}

\newcommand{\uu}{n'}
\newcommand{\comp}{\mu}
\newcommand{\vmin}{n_{{\tiny\mbox{min}}}}
\newcommand{\lmin}{k_{{\tiny\mbox{min}}}}

\allowdisplaybreaks

\title{\textbf{On the decomposition of connected graphs into their biconnected components}}
\author{\^Angela Mestre\footnote{Present address: Centro de Estruturas Lineares e Combinat\'orias,
Complexo Interdisciplinar da Universidade de Lisboa,
Av. Prof. Gama Pinto 2, 1649-003 Lisboa Portugal;
email: mestre@alf1.cii.fc.ul.pt.}\\
Institut de Min\'eralogie et de Physique des Milieux Condens\'es,\\
Universit\'e Pierre et Marie Curie,
Campus Boucicaut,\\
140 rue de Lourmel, F-75015 Paris\\ France
}

\begin{document}

\maketitle

\begin{abstract}
We give a recursion formula to generate all equivalence classes of biconnected
graphs with %rational 
coefficients
given by
the inverses of the orders of their groups of automorphisms. %Furthermore,
We
give a linear map to produce a connected graph with say,
$\mu$, biconnected components from one  with $\mu-1$ biconnected components.  We use
such map to extend the aforesaid result to 
connected or $\two$-edge connected graphs. 
The underlying algorithms are amenable to computer implementation.

\end{abstract}

\bigskip
\bigskip

\section{Introduction}\label{se:intro}
As pointed out in \cite{read}, generating graphs  may be useful for numerous reasons. These include giving  more insight into enumerative problems or the  study of some properties of graphs. Problems of graph generation may also suggest conjectures or point out counterexamples.

In particular, the problem of generating graphs taking into account their symmetries was considered as early as  the 19th-century \cite{Jordan}.  Sometimes such problem is so that graphs are weighted by scalars given by the inverses of the orders of their groups of automorphisms. One instance is  \cite{Polya} (page $\two$09).  Many other examples may be found in  mathematical physics (see for instance \cite{ItZu:qft} and references therein).
In this context, the problem is traditionally dealt with via generating functions and functional derivatives. 
However, any method to straightforwardly manipulate graphs may actually be used. 
In particular, the main results of \cite{MeOe:npoint} and \cite{MeOe:loop} are recursion formulas to generate all equivalence classes of trees and connected graphs (with multiple edges and loops allowed), respectively, 
via  Hopf algebra. The key feature is that the sum of
 the coefficients of all graphs in the same equivalence class is given by the inverse of the order of
their automorphism group. 

Furthermore,  in \cite{Me:classes} the algorithm underlying the main result of \cite{MeOe:loop} was translated to the language of graph theory.  To this end,  
basic graph transformations whose action mirrors that of the Hopf algebra structures considered in the latter paper,  were given. 
The result  was  also extended to further classes of connected graphs, namely, $\two$-edge connected, simple  and loopless graphs.

In the following, let \emph{graphs} be loopless graphs with external edges allowed. That is, edges which are connected to vertices only at one end. In the present paper, we generalize formula (5) of \cite{Me:classes} to 
  biconnected graphs.
Moreover, we give
a linear map to produce connected graphs from connected ones by increasing the number of their 
biconnected components %(i.e., maximal biconnected subgraphs) 
by one unit. 
We use this map  to give an algorithm to generate all equivalence classes of
connected or $\two$-edge connected graphs 
with the exact coefficients. This  is so that generated graphs are automatically decomposed into their biconnected components. 
The proof proceeds as  suggested in \cite{MeOe:npoint}.
That is, given an arbitrary  equivalence class whose representative is a graph  on $\edge$ internal edges, say, $\graph$, we show that every one of the $\edge$ internal edges of the graph $\graph$
 adds $1/(\edge\cdot \vert\aut(\graph)\vert)$ 
to the sum of the coefficients of all graphs isomorphic to $\graph$. 
To this end, we use the 
fact that vertices carrying (labeled) external edges are  held fixed under any automorphism.

This paper is organized as follows: Section~\ref{sec:basics} reviews the basic concepts of graph theory underlying much of the paper. Section~\ref{sec:linear} contains the definitions of the basic linear maps to be  used in  the following sections. Section \ref{sec:biconn}  presents an algorithm to generate biconnected graphs and gives some examples. Section  \ref{sec:conn} extends the result to connected and $\two$-edge connected graphs.

\section{Basics}
\label{sec:basics}
We briefly review the basic concepts of graph theory that are relevant for the following sections.  More details may be found in any standard textbook on graph theory such as \cite{diestel}, or in \cite{Me:classes} for the treatment of graphs with external edges allowed.
This section overlaps Section $\two$  of 
\cite{Me:classes} except for
the concept of biconnected graph. However, as the present paper only considers loopless graphs,  the definition of graph  given in that paper 
specializes here for loopless graphs.

\bigskip

  Let  $A$ and $B$ denote sets.  By  $[A,B]$, we denote the set of all unordered pairs of elements of $A$ and $B$, $\{\{a,b\}\vert a\in A, b\in B\}$. In particular, by  $[A]^\two\defeq[A,A]$, we denote the set of all $\two$-element subsets of $A$.   Also, by $\two^A$, we denote the power set of $A$, i.e., the set of all subsets of $A$.  By $\card(A)$, we denote the cardinality of the set $A$. Furthermore, we recall  that  the symmetric difference of the sets $A$ and $B$ is given by $A\triangle B\defeq(A\cup B)\backslash (A\cap B)$. Finally, given a graph $\graph$, by $\vert\aut(\graph)\vert$, we denote the order of its automorphism group $\aut(\graph)$.
\medskip

Let  $\mathcal{V}=\{v_i\}_{i\in\mathbb{N}}$ and $\mathcal{K}=\{\legs_a\}_{a\in\mathbb{N}}$ be infinite sets so that $\mathcal{V}\cap\mathcal{K}=\emptyset$. Let $V\subset\mathcal{V}$; $V\neq\emptyset$ and $K\subset\mathcal{K}$ be finite
sets. 
Let  $E=\Int\cup\Ext\subseteq[K]^\two$ and $\Int\cap\Ext=\emptyset$. Also, let the elements of $E$ satisfy
$\{\legs_a,\legs_{a'}\}\cap\{\legs_b,\legs_{b'}\}=\emptyset$. That is, $\legs_a,\legs_{a'}\neq\legs_b,\legs_{b'}$. 
In this context, a  \emph{graph} is a triple $\graph=(V,K,E)$ together with  the following  maps: 
\begin{enumerate}[(a)]
\item$\mapi:\Int\rightarrow [V]^\two; \{\legs_a,\legs_{a'}\}\mapsto \{v_i,v_{i'}\}$; 
\item $\mape:\Ext\rightarrow [V,K]; \{\legs_a,\legs_{a'}\}\mapsto \{v_i,\legs_{a'}\}$. 
\end{enumerate}
The elements of $V$, $E$ and $K$ are called \emph{vertices}, \emph{edges}, and \emph{ends of edges}, respectively. In particular, the elements of $\Int$ and $\Ext$ are called  \emph{internal edges} and \emph{external edges}, respectively. 
 The \emph{degree} of a vertex is the number of ends of edges
assigned to the vertex. Two distinct vertices  connected together by one or more internal edges, are said to be \emph{adjacent}. Two or more internal edges connecting the same pair of distinct vertices together, are called \emph{multiple edges}. Furthermore, let $\card(\Ext)=\ext$. Let $L=\{x_1,\ldots,x_{\ext}\}$ be a \emph{label} set. 
A \emph{labeling} of the external edges of the graph $\graph$, is an injective map $l:\Ext\rightarrow [K,L]; \{\legs_a,\legs_{a'}\}\mapsto \{\legs_a,x_z\}$, where $z\in\{1,\ldots,\ext\}$. A graph
$\graph^*=(V^*,K^*,E^*)$; $E^*=\Int^*\cup\Ext^*$, together with the maps $\mapi^*$ and $\mape^*$ is called a \emph{subgraph} of a graph
$\graph=(V,K,E)$; $E=\Int\cup\Ext$, together with the maps $\mapi$ and $\mape$ if $V^*\subseteq V$, $K^*\subseteq K$,
$E^*\subseteq E$ and $\mapi^*=\mapi\rvert_{\Int^*}$, $\mape^*=\mape\rvert_{\Ext^*}$.

A \emph{path}  is  a graph $P=(V,K,\Int)$; $V=\{v_1,\ldots,v_{\vertex}\}$, $\vertex\defeq\card(V)>1$, together with the map $\mapi$
so that   $\mapi(\Int)=\{\{v_1,v_\two\},\{v_\two, v_3\},\ldots,\{v_{n-1},v_\vertex\}\}$ and the vertices $v_1$ and $v_\vertex$ have degree 1, while the vertices $v_\two,\ldots,v_{\vertex-1}$ have degree $\two$. In this context, the vertices $v_1$ and $v_\vertex$ are called the \emph{end
 point} vertices, while the vertices $v_\two,\ldots,v_{\vertex-1}$ are called the \emph{inner} vertices.
A \emph{cycle}   is a  graph $C=(V',K',\Int')$; $V'=\{v_1,\ldots,v_{\vertex}\}$,  together with the map $\mapi'$
 so that   $\mapi'(\Int')=\{\{v_1,v_\two\},\{v_\two, v_3\},\ldots,\{v_{\vertex-1},v_\vertex\}, \{v_\vertex, v_1\}\}$ and every vertex has degree $\two$. 
A graph is said to be \emph{connected} if every pair of vertices is joined by a path. Otherwise, it is \emph{disconnected}.

Given a graph  $\graph=(V,K,E)$; $E=\Int\cup\Ext$, together with the maps $\mapi$ and $\mape$, a maximal connected subgraph of the graph $\graph$
is called a {\em component}.  Moreover, the set $\two^{\Int}$ is a vector space over the field $\mathbb{Z}_\two$ so that vector addition is given by the  symmetric difference. The \emph{cycle space} $\CC$ of the graph $\graph$ is defined as the subspace of $\two^{\Int}$ generated by all the cycles in $\graph$. The dimension of $\CC$ is  called the \emph{cyclomatic number} of the graph $\graph$. Let $\lp\defeq\dim\CC$, $\vertex\defeq\card(V)$ and  $\edge\defeq\card(\Int)$. Then,  $\lp=\edge-\vertex+c$, where $c$ denotes the number of connected components of the graph $\graph$ \cite{Kirch}.  

Furthermore, %let  $\graph^*=(V^*,K^*,E^*)$; $E^*=\Int^*\cup\Ext^*$, together with the maps $\mapi^*$ and $\mape^*$ denote 
given a connected graph, a vertex %(resp. internal edge) 
whose removal (together with attached edges) disconnects the graph is called a {\em cut vertex}. %(resp. {\em bridge}).   
A graph is said to be {\em $\two$-connected} (resp. {\em $\two$-edge connected}) if it remains connected after erasing any vertex   (resp. any internal edge). 
A  $\two$-connected graph (resp. $\two$-edge connected graph) is also called \emph{biconnected} (resp. {\em edge-biconnected}). %By convention, we consider that a graph consisting of an isolated vertex is both biconnected and $\two$-edge connected.
 Furthermore, a \emph{biconnected component} of a  connected graph  is a maximal biconnected subgraph (see \cite{handbook} 6.4 for instance). 

Now, let $L=\{x_1,\ldots,x_{\ext}\}$ be a finite label set. Let $\graph=(V,K,E)$; $E=\Int\cup\Ext$,  $\card(\Ext)=\ext$, together with the maps $\mapi$ and $\mape$, and $\graph^*=(V^*,K^*,E^*)$;  $E^*=\Int^*\cup\Ext^*$,  $\card(\Ext^*)=\ext$, together with the maps $\mapi^*$ and $\mape^*$ denote two graphs. Let $l:\Ext\rightarrow [K,L]$ and $l^*:\Ext^*\rightarrow [K^*,L]$ be labelings of the elements of $\Ext$ and $\Ext^*$, respectively.  An \emph{isomorphism} between the graphs $\graph$ and $\graph^*$ is a bijection $\psi_V:V\to V^*$ and  a bijection $\psi_K:K\to K^*$  which satisfy the following three conditions:
\begin{enumerate}[(a)]
\item $\mapi(\{\legs_{a},\legs_{a'}\})=\{v_i,v_{i'}\}$ iff $\mapi^*(\{\psi_K(\legs_{a}),\psi_K(\legs_{a'})\})=\{\psi_V(v_i),\psi_V(v_{i'})\}$; 
\item $\mape(\{\legs_{a},\legs_{a'}\})=\{v_i,\legs_{a'}\}$ iff $\mape^*(\{\psi_K(\legs_{a}),\psi_K(\legs_{a'})\})=\{\psi_V(v_{i}),\psi_K(\legs_{a'}) \}$; 
\item $L\cap l(\{\legs_a,\legs_{a'}\})=L\cap l^*(\{\psi_K(\legs_a),\psi_K(\legs_{a'})\})$.
\end{enumerate}
An isomorphism  defines an equivalence relation on graphs.
A \emph{vertex} (resp. \emph{edge}) isomorphism between the graphs $\graph$ and $\graph^*$  is an isomorphism
so that $\psi_K$ (resp. $\psi_V$) is the identity map. 
In this context, a \emph{symmetry} of a graph $\graph$ is an isomorphism
 of the graph  onto itself (i.e., an \emph{automorphism}). 
 A \emph{vertex symmetry} (resp. \emph{edge symmetry}) of
a   graph $\graph$ is a vertex (resp. edge) automorphism of the graph.  Given a graph $\graph$, let $\autv(\graph)$ and $\aute(\graph)$
denote the groups of vertex and edge automorphisms, respectively. Then,  
$\vert\aut(\graph)\vert=
\vert\autv(\graph)\vert\cdot
\vert\aute(\graph)\vert$ (a proof is given in \cite{MeOe:loop} for instance).

\section{Elementary linear transformations}\label{sec:linear}
We introduce the elementary linear maps to be used in the following. 

\bigskip

Given an arbitrary set $X$, by $\Q X$, we  denote the free vector space on the  set  $X$ over $\Q$. 
%Furthermore, 
By  $\ii_X:X\to X;x\mapsto x$, we denote the identity map.
Given maps $f:X\to X^*$ and $g:Y\to Y^*$, by $[f,g]$, we denote the map  $[f,g]:[X,Y]\to[X^*,Y^*];\{x,y\}\mapsto\{f(x),g(y)\}$ with $[f]^\two\defeq[f,f]$. 
 
\medskip

Let  $\mathcal{V}=\{v_i\}_{i\in\mathbb{N}}$ and $\mathcal{K}=\{\legs_a\}_{a\in\mathbb{N}}$ be infinite sets so that $\mathcal{V}\cap\mathcal{K}=\emptyset$. Fix an integer % $t,\ext,\lp\ge0$  and  $\vertex\ge1$.
$\ext\ge 0$. Let $L=\{x_1,\ldots,x_\ext\}$ be a   label set. For all integers $\vertex\ge 1$ and $\lp\ge 0$, by $\Vc^{\vertex,\lp,\ext}$ (resp. $\Vd^{\vertex,\lp,\ext}$), we denote the set  of all
(loopless) connected graphs (resp. disconnected graphs with two components)  with $\vertex$ vertices,  cyclomatic number $\lp$ and $\ext$ external edges whose free ends are labeled $x_1,\ldots,x_\ext$.  In all that follows, let $V=\{v_1,\ldots,v_\vertex\}\subset\mathcal{V}$, $K=\{\legs_1,\ldots,\legs_t\}\subset\mathcal{K}$ and $E=\Int\cup\Ext\subseteq[K]^2$ be the sets of vertices, ends of edges and edges, respectively, of all elements of $\Vc^{\vertex,\lp,\ext}$ (resp. $\Vd^{\vertex,\lp,\ext}$), so that $\card(\Int)=\lp+\vertex-1$ (resp. $\card(\Int)=\lp+\vertex-\two$)  and $\card(\Ext)=\ext$. 
Also,  let $l:\Ext\rightarrow [K,L]$ be a labeling of  their external edges. 
 Finally,  by $\Vb^{\vertex,\lp,\ext}$ and $\Ve^{\vertex,\lp,\ext}$,  we denote the subsets of $\Vc^{\vertex,\lp,\ext}$ whose elements are biconnected and $\two$-edge connected graphs, respectively. 

\medskip

We begin by briefly recalling the linear maps $\extb_{\Ext,V}$, $\Eij$ and $\sione$   introduced in Sections 3 and 5.3.4 of \cite{Me:classes}. We refer the reader to that paper for the precise definitions. 

\begin{enumerate}[(i)]
\item \emph{Distributing external edges between all elements of a given vertex subset in all possible ways}:
Let $\graph=(V,K,E)$;  
$E=\Int\cup\Ext$, 
together with the maps $\mapi$ and $\mape$ denote a graph in $\Vc^{\vertex,\lp,\ext}$.  Let $V'=\{v_{z_1},\ldots,v_{z_{\vertex'}}\}\subseteq V$. 
Let $K'\subset\mathcal{K}$ be a finite set so that 
$K\cap K'=\emptyset$.  Also, let $\Ext'\subseteq [K']^\two$; $\ext'\defeq\card(\Ext')$. Assume that the elements of $\Ext'$  satisfy  $\{\legs_a,\legs_{a'}\}\cap\{\legs_{b},\legs_{b'}\}=\emptyset$.  Let $L'=\{x_{\ext+1},\ldots,x_{\ext+\ext'}\}$ be a  label set so that $L\cap L'=\emptyset$. Let $l':\Ext'\rightarrow [K',L']$ be a labeling of the elements of $\Ext'$. Finally, let  $\mathcal{I}^{\vertex'}_{\Ext'}$ denote the set of all ordered partitions of the set $\Ext'$  into $\vertex'$ disjoint subsets:  $\mathcal{I}^{\vertex'}_{\Ext'}=\{(\Ext'^{(1)},\ldots,\Ext'^{(\vertex')})\vert\Ext'^{(1)}\cup\ldots\cup \Ext'^{(\vertex')}=\Ext' \quad \mbox{and}\quad \Ext'^{{(i)}}\cap \Ext'^{(j)}=\emptyset\,,\forall i, j\in\{1,\ldots,\vertex'\}\quad\mbox{with}\quad i\neq j\}$.
  In this context, the maps
$$\extb_{\Ext',V'} :\Q \Vc^{\vertex,\lp,\ext}\rightarrow\Q \Vc^{\vertex,\lp,\ext+\ext'};\graph\mapsto\sum_{(\Ext'^{(1)},\ldots,\Ext'^{(\vertex')})\in \mathcal{I}^{\vertex'}_{\Ext'}}\graph_{(\Ext'^{(1)},\ldots,\Ext'^{(\vertex')})}$$ are defined to produce each of 
 the graphs $\graph_{(\Ext'^{(1)},\ldots,\Ext'^{(\vertex')})}$ from the graph $\graph$ by assigning all elements of  $\Ext'^{(j)}$ to the vertex $v_{z_j}$ for all $j\in\{1,\ldots,\vertex'\}$.

\item The maps $\Eij:\Q \Vc^{\vertex,\lp,\ext}\rightarrow\Q \Vc^{\vertex,\lp+1,\ext};\graph\mapsto\graph^*$ are defined to
 produce
the graph $\graph^*$  from the graph $\graph$ by connecting (or reconnecting) 
the vertices $v_i$ and $v_j$ with an internal edge for all   $i,j\in\{1,\ldots,\vertex\}$ with $i\neq j$.

\item
\begin{enumerate}
\item
We define the maps $\sione$ from the maps $\si$ given in Section 3 of \cite{Me:classes}, by restricting the image of the latter to graphs without isolated vertices. More precisely,
let $\graph=(V,K,E)$; 
$E=\Int\cup\Ext$, 
together with the maps $\mapi$ and $\mape$ denote a graph in $\Vc^{\vertex,\lp,\ext}$. Let $\legsii\subset K$ be the set of ends of internal edges assigned to the vertex $v_i\in V$; $i\in\{1,\ldots,\vertex\}$. Let $\partii^\two$ denote the set of all ordered partitions of the set $\legsii$  into two non-empty disjoint sets: $\partii^\two=\{(\legsii^{(1)},\legsii^{(\two)})\vert \legsii^{(1)}, \legsii^{(\two)}\neq\emptyset\,,\quad\legsii^{(1)}\cup \legsii^{(\two)}=\legsii\quad  \mbox{and}\quad \legsii^{{(1)}}\cap \legsii^{(\two)}=\emptyset\}$. Moreover, let $\edgesie\subset\Ext$ denote the set of external edges assigned to the vertex $v_i$.  In this context, for all $i\in\{1,\ldots,\vertex\}$, define the maps
%{\small
\begin{eqnarray*}\lefteqn{\sione:\Q \Vc^{\vertex,\lp,\ext}\rightarrow\Q \Vc^{\vertex+1,\lp-1,\ext}\cup\Q \Vd^{\vertex+1,\lp,\ext};}\\
&& \graph\mapsto\left\{\begin{array}{ccc}0 \quad \mbox{if} \quad 0\leq\card(\legsii)<\two\,;\\
\extb_{\edgesie,\{v_i,v_{\vertex+1}\}}\biggl(\sum_{(\legsii^{(1)},\legsii^{(\two)})\in\partii^{\two}}\graph_{(\legsii^{(1)},\legsii^{(\two)})}\biggr)\,\, \mbox{otherwise}\,, &\end{array}\right.\end{eqnarray*}%}
where each of the graphs $\graph_{(\legsii^{(1)},\legsii^{(\two)})}$ is produced  from the graph $\graph$ as follows: (a) split the vertex $v_i$ into two vertices, namely, $v_i$ and $v_{\vertex+1}$; (b) assign the ends of edges in $\legsii^{(1)}$ and $\legsii^{(\two)}$ to $v_i$ and $v_{\vertex+1}$, respectively.
\item 
Let 
$\mathcal{B}_i=\{\graph_i^1,\ldots,\graph_i^{\vertex_i}\}$ with $\vertex_i\ge 1$, be the set of biconnected components of the graph $\graph$ containing the vertex $v_i$. That is,  $v_i\in V_i^j$,   where $V_i^j$ denotes the vertex set of the graph $\graph_i^j$ for all $j\in\{1,\ldots, \vertex_i\}$.  Let $K^j$ denote the set of ends of edges of the graph $\graph_i^j$.  Let $\legsii^j\subset K^j$ be the set of ends of edges assigned to the vertex $v_i$ which belong to the graph $\graph_i^j$. Hence, $\legsii=\cup_{j=1}^{\vertex_i}\legsii^j$ and $\legsii^j\cap\legsii^m=\emptyset$ for all $j,m=1,\ldots, \vertex_i$ with $j\neq m$. In this context, we define the maps $\sifac$ by restricting the image of $\sione$ to graphs obtained  by replacing in the above definition the set $\partii^{\two}$ by  
$\partiifac^\two=\{(\legsiifac^{(1)},\legsiifac^{(\two)})\vert \legsiifac^{(1)}, \legsiifac^{(\two)}\neq\emptyset\,,\quad\legsiifac^{(1)}\cup \legsiifac^{(\two)}=\legsii\,, \legsiifac^{{(1)}}\cap \legsiifac^{(\two)}=\emptyset\  \quad  \mbox{and}\quad \legsiifac^{(1)}\cap\legsii^j\neq\emptyset,\legsiifac^{(\two)}\cap\legsii^j\neq\emptyset \forall j=1,\ldots,\vertex_i\} $ with $\sifac(\graph)\defeq 0$ if there exists $ j$ so that $
\card(\legsii^j)<\two$. 

\item
We define the  maps $\Qti$ and  $\Qtifac$ in analogy with the maps $\Qi\defeq\frac{1}{\two}{\Ein^{\multiedges}}\circ\si$ given in \cite{Me:classes} (see also \cite{103} for $\multiedges=1$):
\begin{eqnarray}\label{eq:Qt}
\Qti\defeq\frac{1}{\two(\multiedges-1)!}{\Ein^{\multiedges}}\circ\sione:\Q \Vc^{\vertex,\lp,\ext}\to\mathbb{Q}\Vc^{\vertex+1,\lp+\multiedges-1,\ext}\,,\\
\label{eq:Qtfac}
\Qtifac\defeq\frac{1}{\two(\rho-1)!}{\Ein^{\rho}}\circ\sifac:\Q \Vc^{\vertex,\lp,\ext}\to\mathbb{Q}\Vc^{\vertex+1,\lp+\rho-1,\ext}\,,
\end{eqnarray}
where $\Ein^{\multiedges}$ denotes the $\multiedges$th iterate of $\Ein$ with $\Ein^{0}=\ii$.
\end{enumerate}
We now introduce the following auxiliary map: 
\item 
Fix  integers  $\vertex'>0$ and $1\le i\leq\vertex'$. 
Let $\graph=(V,K,\Int)$
together with the map $\mapi$ 
denote a graph in $\Vc^{\vertex,\lp,0}$. We define the map
$\shift:\graph\mapsto\graph^*$, where the graph $\graph^*$ satisfies the following conditions:
\begin{enumerate}[(a)]
\item $V^*=\chi_v^i(V)$, where $\chi_v^i:v_l\mapsto\left\{\begin{array}{cc}v_i &\mbox{if}\quad  l=1\\ v_{\vertex'+l-1} &\mbox{if}\quad  l\in\{\two,\ldots,\vertex\}\\
\end{array}\right.$ is a bijection;
\item 
$K^*=\chi_e^t(K)$, where $\chi_e^t:\legs_b\mapsto\legs_{t+b}$ is a bijection; 
\item 
$\Int^*=\chiet(\Int)$; 
\item 
$\mapi^*\circ\chiet=\chivi\circ\mapi$.  
\end{enumerate}
The maps $\shift$
are extended to the whole of $\mathbb{Q} \Vc^{\vertex,\lp,0}$ by linearity.
\item
\begin{enumerate}
\item
\emph{Distributing the biconnected components of a
connected graph sharing a vertex, 
between all the vertices of a given biconnected graph in all possible ways}: 
Let $\graph=(V,K,E)$;   
$E=\Int\cup\Ext$, 
together with the maps $\mapi$ and $\mape$ denote a graph in $\Vc^{\vertex,\lp,\ext}$. 
Let $\edgesii\subseteq\Int$ and $\edgesie\subseteq\Ext$ be the sets of internal and external edges, respectively, assigned to the vertex  $v_i\in V$ with $i\in\{1,\ldots,\vertex\}$.
Let $\mathcal{B}_i=\{\graph_i^1,\ldots,\graph_i^{\vertex_i}\}$ be the set of biconnected components of the graph $\graph$ sharing the vertex $v_i$. That is,  $v_i\in V_i^j$,   where $V_i^j$ denotes the vertex set of the graph $\graph_i^j$ for all $j\in\{1,\ldots, \vertex_i\}$.
Let $\partiib^{\uu}$ denote the set of all ordered partitions of the set $\mathcal{B}_i$  into $\uu$ disjoint sets: $\partiib^{\uu}=\{(\mathcal{B}_i^{(1)},\ldots,\mathcal{B}_i^{(\uu)})\vert\mathcal{B}_i^{(1)}\cup\ldots\cup \mathcal{B}_i^{(\uu)}=\mathcal{B}_i \quad \mbox{and}\quad \mathcal{B}_i^{{(l)}}\cap \mathcal{B}_i^{(l')}=\emptyset\,,\forall l, l'\in\{1,\ldots,\uu\}\quad\mbox{with}\quad l\neq l'\}$.
For all $l\in\{1,\ldots,\uu\}$, let $\mathcal{B}_i^{(l)}=\{\graph_{i,1}^{(l)},\ldots,\graph_{i,\vertex_{(l)}}^{(l)}\}$.
Also, let $V^{(l)}_{i,a}\subset V$ denote the vertex set of the graph $\graph_{i,a}^{(l)}$ for all $a\in\{1,\ldots,\vertex_{(l)}\}$. 
Let $\edgesiia^{(l)}\subseteq\edgesii$ and $V'^{(l)}_a\subseteq V^{(l)}_{i,a}\backslash\{v_i\}$ be so that $\mapi(\edgesiia^{(l)})=[v_i,V'^{(l)}_a]$.
Let  $\edgesii^{(l)}\defeq\bigcup_{a=1}^{\vertex_{(l)}}\edgesiia^{(l)}$ and $V'^{(l)}\defeq\bigcup_{a=1}^{\vertex_{(l)}}V'^{(l)}_a$. Finally, let $\graph'$ %=(V',K',\Int')$  
%together with the map $\mapi'$ 
denote a graph in $\Vb^{\uu,\lp',0}$. 
Also, let $\shift(\graph')=(\hat{V},\hat{K},\Inth)$ together with the map  $\mapih$ be the graph obtained from $\graph'$ by applying the map $\shift$ given above. 
In this context, for all $i\in\{1,\ldots,\vertex\}$ define 
\begin{eqnarray*}\lefteqn{\ri^{\graph'}:\Q \Vc^{\vertex,\lp,\ext}\rightarrow\Q \Vc^{\vertex+\uu-1,\lp+\lp',\ext};}\\
&& \graph\mapsto\extb_{\edgesie,\hat{V}}\biggl(\sum_{(\mathcal{B}_i^{(1)},\ldots,\mathcal{B}_i^{(\uu)})\in\partiib^{\uu}}\graph_{(\mathcal{B}_i^{(1)},\ldots, \mathcal{B}_i^{(\uu)})}\biggr)\,,\end{eqnarray*}
where the graphs $\graph_{(\mathcal{B}_i^{(1)},\ldots, \mathcal{B}_i^{(\uu)})}=(V^*,K^*,E^*)$; $E^*=\Int^*\cup\Ext^*$, together with the maps $\mapi^*$ and $\mape^*$ satisfy the following conditions:
\begin{enumerate}[(a)]
\item $V^*=V\backslash\{v_i\}\cup\hat{V}$; 
\item $K^*=K\cup\hat{K}$;
\item $E^*=\Int^*\cup\Ext^*$, where $\Int^*=\Int\cup\Inth$, $\Ext^*=\Ext\backslash\edgesie$; 
\item  $\mapi^*|_{\Int\backslash\edgesii}=\mapi|_{\Int\backslash\edgesii}$, $\mapi^*|_{\Inth}=\mapih|_{\Inth}$;\\
$\mapi^*(\edgesii^{(1)})=[v_i,V'^{(1)}]$ and $\mapi^*(\edgesii^{(l)})=[v_{l+\uu-1},V'^{(l)}]$ for all $l\in\{\two,\ldots,\uu\}$;
\item $\mapeii|_{\Ext\backslash\edgesie}=\mape|_{\Ext\backslash\edgesie}$; 
\item $l^*= l|_{\Ext\backslash\edgesie}$  
is a labeling of the elements of $\Ext^*$. 
\end{enumerate} 
The maps $\ri^{\graph'}$ are extended to the whole of $\Q \Vc^{\vertex,\lp,\ext}$ by linearity. 
For instance, let $C_4$ denote a cycle on four vertices. Figure \ref{fig:rbox} shows the result of applying the map $\rbox^{C_4}$ to the cut vertex of a $\two$-edge connected graph with two biconnected components. %, where all graphs in the same equivalence class are identified as the same.
\begin{figure}[h]
\begin{center}
\includegraphics[width=11cm]{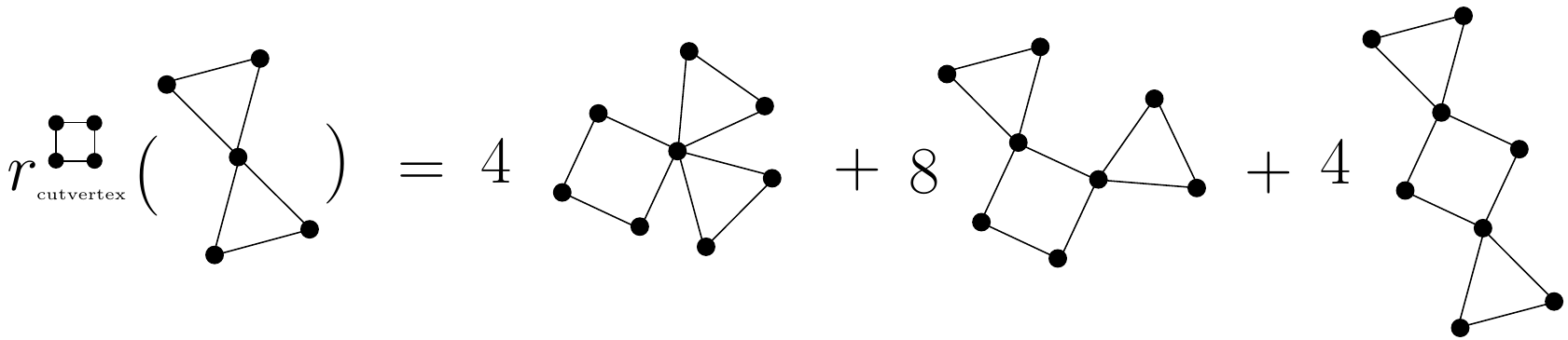}
\end{center}
\caption{Linear combination of graphs obtained by applying the maps $\rbox^{C_4}$ to the cut vertex of the graph consisting of two triangles sharing one vertex.} 
\label{fig:rbox}
\end{figure}
\item
%Morever, 
We define the maps $\rifac^\graph$ by restricting the image of $\ri^\graph$ to graphs obtained  by replacing in the definition of the latter the set $\partiib^{\uu}$ by the following set of $\uu$-tuples:  $\{(\mathcal{B}_i,\emptyset,\ldots,\emptyset), (\emptyset, \mathcal{B}_i, \ldots, \emptyset),\ldots, (\emptyset,\ldots,\emptyset,\mathcal{B}_i)\}$.  
\item
%Furthermore, 
Given a linear combination of graphs  $\vartheta=\sum_{\graph\in\Vb^{\vertex,\lp,\ext}} \alpha_\graph \graph$; $\alpha_\graph\in\Q$ in $\Q \Vb^{\vertex,\lp,\ext}$, define 
\begin{equation}\label{eq:Qgen}
\ri^{\vartheta}\defeq\sum_{\graph\in\Vb^{\vertex,\lp,\ext}} \alpha_\graph\, \ri^{\graph}.
\end{equation}
\begin{equation}\label{eq:Qgenfac}
\rifac^{\vartheta}\defeq\sum_{\graph\in\Vb^{\vertex,\lp,\ext}} \alpha_\graph\, \rifac^{\graph}.
\end{equation}
\end{enumerate}
We  proceed to generalize the edge contraction operation  
to  the operation of contracting a biconnected component of a 
connected graph. 
\item 
\emph{Contracting a biconnected component of a connected graph}:
Let $\graph=(V,K,E)$; 
$E=\Int\cup\Ext$, 
together with the maps $\mapi$ and $\mape$ denote a graph in $\Vc^{\vertex,\lp,\ext}$, where $\vertex>1$. 
Consider the following biconnected component of the graph $\graph$: $\hat{\graph}=(\hat{V},\hat{K},\hat{E})$; $\hat{V}=\{v_{i_1},\ldots,v_{i_{\uu}}\}\subseteq V$, $\hat{K}=\{\legs_{j_1},\ldots,\legs_{j_{t'}}\}\subseteq K$; $\hat{E}=\Inth\cup\Exth\subseteq E$, 
together with the maps $\mapih$ and $\mapeh$.    Let $i_1<\ldots<i_{\uu}$ and $j_1<\ldots <j_{t'}$. Also, let $\lp'$ denote the cyclomatic number of the graph $\hat{\graph}$.
Let $\mathcal{B}=\{\graph_l,\ldots,\graph_{\vertex_\BB}\}$ 
be the set of biconnected components of the graph $\graph$ so that   $\hat{V}\cap V_l\neq\emptyset$,  % and $\hat{\graph}\neq\graph_l$  
where $V_l\subset V$ denotes the vertex set of the graph $\graph_l$ for all $l\in\{1,\ldots,\vertex_\BB\}$.
Now, let $\edgesiil\subseteq\Int$ and $V'_l\subseteq V_l\backslash(\hat{V}\cap V_l)$ be so that $\mapi(\edgesiil)\defeq[\hat{V}\cap V_l,V'_l]$; %\subseteq[\hat{V},V_l]$; 
$l\in\{1,\ldots,\vertex_\BB\}$. Also, let $\edgesiiB\defeq\bigcup_{l=1}^{\vertex_\BB}\edgesiil$ and $V'^{\mathcal{B}}\defeq\bigcup_{l=1}^{\vertex_\BB}V'_l$.
Finally, let $\legsfree\subset K$ denote the set of free ends of the external edges in $\Exth$.
In this context, define 
$$\contraction:\Q \Vc^{\vertex,\lp,\ext}\rightarrow\Q \Vc^{\vertex-\uu+1,\lp-\lp',\ext};\graph\mapsto\graph^*\,,$$
where the graph $\graph^*=(V^*,K^*,E^*)$; $E^*=\Int^*\cup\Ext^*$, together with the maps $\mapi^*$ and $\mape^*$ satisfies the following conditions:
\begin{enumerate}[(a)]
\item $V^*=\chi_v^{\uu}(V\backslash(\hat{V}\backslash\{v_{i_1}\}))$, where $$\chi_v^{\uu}:v_l\mapsto\left\{\begin{array}{cc}v_l &\mbox{if}\quad  l\in\{1,\ldots,i_\two-1\}\\ v_{l-j+1} &\mbox{if}\quad  l\in\{i_{j}+1,\ldots,i_{j+1}-1\}, j\in\{\two,\ldots,\uu\}\\
v_{l-\uu+1} &\mbox{if}\quad  l\in\{i_{\uu}+1,\ldots,\vertex\}\\
\end{array}\right.$$ is a bijection; 
\item$K^*=\chi_e^{t'}(K\backslash\hat{K})$, where $$\chi_e^{t'}:\legs_b\mapsto\left\{\begin{array}{cc} \legs_{b} &\mbox{if}\quad  b\in\{1,\ldots,j_1-1\}\\\legs_{b-l} &\mbox{if}\quad b\in\{j_l+1,\ldots,j_{l+1}-1\}, l\in\{\two,\ldots,t'-1\}\\\legs_{b-t'} &\mbox{if}\quad b\in\{j_{t'}+1,\ldots,t\}\\\end{array}\right.$$ is a bijection; 
\item$E^*=\Int^*\cup\Ext^*$, where $\Int^*=\chietprime(\Int\backslash\Inth)$, $\Ext^*=\chietprime(\Ext)$; 
\item$\mapi^*\circ\chietprime\vert_{\Int\backslash(\Inth\cup\edgesiiB)}=\chivu\circ\mapi\vert_{\Int\backslash(\Inth\cup\edgesiiB)}$;\\
$\mapi^*(\chietprime(\edgesiiB)=[v_{i_1},\chi_v^{\uu}(V'^{\mathcal{B}})]$;
\item 
$\mape^*\circ\chietprime\vert_{\Ext\backslash\Exth}=[\chi_v^{\uu},\chi_e^{t'}]\circ\mape\vert_{\Ext\backslash\Exth}$;\\
$\mape^*(\chietprime(\Exth))=[v_{i_1},\chi_e^{t'}(\legsfree)]$;
\item 
$l^*=[\chi_e^{t'},\ii_L]\circ l\circ\chieinvtprime:\Ext^*\to[K^*,L]$ 
is a labeling of the elements of $\Ext^*$.
\end{enumerate} 
%The maps $\contraction$ are extended to the whole of $\Q \Ve^{\vertex,\lp,\ext}$ by linearity. 
\item 
 \emph{Erasing all external edges of a connected graph}:
Let $\graph=(V,K,E)$;  
$E=\Int\cup\Ext$, 
together with the maps $\mapi$ and $\mape$ denote a graph in $\Vc^{\vertex,\lp,\ext}$. Let $K_1,K_\two\subset K$ satisfy $K_1\cap K_\two=\emptyset$ and $\Ext=[K_1,K_\two]$. Let $K_1\cup K_\two=\{e_{i_1},\ldots,e_{i_{\two\ext}}\}$  with $i_1<\ldots<i_{\two\ext}$. In this context,   
define 
$$\extc :\Q \Vc^{\vertex,\lp,\ext}\rightarrow\Q \Vc^{\vertex,\lp,0};\graph\mapsto\graph^*\,,$$ 
where the graph $\graph^*=(V^*,K^*,\Int^*)$ together with the map $\mapi^*$ satisfies the following conditions:
\begin{enumerate}[(a)]
\item $V^*=V$;
\item $K^*=\chi_e^{\two\ext}(K\backslash (K_1\cup K_\two))$, where the bijection $\chi_e^{\two\ext}$ is given in (vi); 
%\cup_{i=1}^\ext\{e_{i_j},e_{i_k}\})$, where $\{e_{i_j},e_{i_k}\}\in\Ext$ with $i_j,i_k\in \{1,\ldots,t\}$ for all $i\in\{1,\ldots,\ext\}$;
\item $E^*=\chies(\Int)$;  
\item $\mapi^*(\chies(\Int))=\mapi$.
\end{enumerate} 
%The maps $\extc$
%are extended to the whole of $\Q \Vc^{\vertex,\lp,\ext}$ by linearity.
\end{enumerate}

\smallskip
The following lemmas are  now established.

\begin{lem}\label{lem:biconntobiconn}
Fix integers $\ext\ge0$, $\lp>0$ and $\vertex>1$.  Then, for all $i\in\{1,\ldots,\vertex\}$,
$\Qti(\Q\Vb^{\vertex,\lp,\ext})\subseteq\Q\Vb^{\vertex+1,\lp+\multiedges-1,\ext}$.
\end{lem}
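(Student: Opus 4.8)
The plan is to unpack the definition $\Qti=\frac{1}{\two(\multiedges-1)!}{\Ein^{\multiedges}}\circ\sione$ and verify that, starting from a biconnected graph $\graph\in\Vb^{\vertex,\lp,\ext}$, each graph appearing with nonzero coefficient in $\Qti(\graph)$ is again biconnected, with the correct parameters. First I would record the combinatorial effect of $\sione$ on $\graph$: it selects a vertex $v_i$ with $\card(\legsii)\ge\two$, chooses an ordered partition $(\legsii^{(1)},\legsii^{(\two)})$ of the internal ends at $v_i$ into two nonempty blocks, splits $v_i$ into $v_i$ and $v_{\vertex+1}$ carrying those blocks respectively, and then redistributes the external edges at $v_i$ between $v_i$ and $v_{\vertex+1}$ via $\extb_{\edgesie,\{v_i,v_{\vertex+1}\}}$. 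The map $\Ein^{\multiedges}$ then adds $\multiedges$ parallel internal edges between $v_{\vertex+1}$ and the last vertex, which I should read here as $v_{n+1}$ (the \texttt{\textbackslash Ein} macro), so it reconnects $v_i$ and $v_{\vertex+1}$ by $\multiedges$ edges. The net change in parameters: vertices $\vertex\mapsto\vertex+1$; internal edges increase by $\multiedges$ from the $\Ein^{\multiedges}$ step while the split itself leaves edge count unchanged, so $\edge\mapsto\edge+\multiedges$; external edges unchanged; hence cyclomatic number $\lp=\edge-\vertex+1\mapsto(\edge+\multiedges)-(\vertex+1)+1=\lp+\multiedges-1$, matching the target space. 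The denominator $\two(\multiedges-1)!$ is just a coefficient and plays no role in this containment statement.

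The substantive point is connectivity. Here I would argue as follows. Splitting $v_i$ of a biconnected graph into $v_i,v_{\vertex+1}$ according to a partition of its internal ends, and then joining $v_i$ to $v_{\vertex+1}$ by $\multiedges\ge1$ edges, produces a graph $\graph^*$ that I claim is biconnected. Connectivity is clear: any vertex formerly reached through $v_i$ is now reached through $v_i$ or $v_{\vertex+1}$, which are themselves joined. For $\two$-connectivity I must show no vertex of $\graph^*$ is a cut vertex. The candidate cut vertices to worry about are $v_i$, $v_{\vertex+1}$, and the former neighbours of $v_i$. Neither $v_i$ nor $v_{\vertex+1}$ can be a cut vertex because they are joined by a parallel edge (so removing one of them, the other keeps its piece attached to the rest of $\graph^*-\{v_i,v_{\vertex+1}\}$ through the remaining split-off ends, using that in the original graph $v_i$ was not a cut vertex, i.e. $\graph-v_i$ was connected). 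For any other vertex $w$, a cycle through $w$ in $\graph$ not using $v_i$ survives untouched; a cycle through $w$ using $v_i$ gives, after the split, either a cycle through $w$ and $v_i$, or one through $w$ and $v_{\vertex+1}$, or a longer cycle that goes $w\cdots v_i$—parallel edge—$v_{\vertex+1}\cdots w$; in every case $w$ lies on a cycle of $\graph^*$, so $w$ is not a cut vertex. Invoking the standard fact that a connected graph with more than two vertices is biconnected iff every vertex lies on a common cycle with every other (equivalently has no cut vertex) finishes this. I should also note that the distribution of external edges by $\extb_{\edgesie,\{v_i,v_{\vertex+1}\}}$ does not affect $\two$-connectivity, since external edges are pendant half-edges and are irrelevant to the existence of cut vertices and to the cyclomatic count.

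I would organize the write-up in two steps: (1) parameter bookkeeping, citing the definitions of $\sione$, $\Ein$, and $\Qti$, and the Kirchhoff relation $\lp=\edge-\vertex+c$ with $c=1$; (2) the connectivity argument above, phrased via "every vertex lies on a cycle, hence no cut vertex." The main obstacle is Step (2), and within it the delicate case is checking that the newly created vertices $v_i$ and $v_{\vertex+1}$ are not cut vertices and that a former neighbour of $v_i$ which happened to be joined to $v_i$ by a \emph{multiple} edge does not become a cut vertex after the split—here one must use that the partition blocks $\legsii^{(1)},\legsii^{(\two)}$ are both nonempty together with biconnectedness of $\graph$ (so that $\graph-w$ was connected and $w$ lay on a cycle) to reroute paths. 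Once that case analysis is done, the containment $\Qti(\Q\Vb^{\vertex,\lp,\ext})\subseteq\Q\Vb^{\vertex+1,\lp+\multiedges-1,\ext}$ follows by linearity since $\Qti$ is defined on basis elements and extended linearly.
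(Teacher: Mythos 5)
Your overall route---explicit parameter bookkeeping plus a direct proof that splitting a vertex of a biconnected graph and rejoining the two halves by $\multiedges\ge 1$ parallel edges yields a biconnected graph---differs from the paper's, which merely notes that $\sione(\graph)$ is a linear combination of connected graphs and then cites Lemma 5 of \cite{Me:classes} for $2$-edge connectivity, asserting biconnectivity from there. Your plan could be made to work, but as written it has a genuine gap in the case of a vertex $w\notin\{v_i,v_{\vertex+1}\}$: the inference ``$w$ lies on a cycle of $\graph^*$, so $w$ is not a cut vertex'' is invalid. A vertex can lie on cycles and still be a cut vertex; the shared vertex of two triangles (exactly the graph shown in Figure~\ref{fig:rbox}) is the standard counterexample. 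The characterization you invoke requires every \emph{pair} of vertices to lie on a \emph{common} cycle, whereas your case analysis only produces, for each single $w$, some cycle through $w$; so $2$-connectivity of $\graph^*$ at the vertices other than the two split vertices is not established. (Your treatment of $v_i$ and $v_{\vertex+1}$ themselves, via connectivity of $\graph-v_i$ and the nonemptiness of both blocks, is correct, as is the count $\lp\mapsto\lp+\multiedges-1$ and the remark that external edges are irrelevant.)

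The repair is short and stays within your framework: for $w\notin\{v_i,v_{\vertex+1}\}$ show directly that $\graph^*-w$ is connected. For instance, contracting one of the $\multiedges$ new parallel edges of $\graph^*-w$ (i.e.\ re-identifying $v_i$ with $v_{\vertex+1}$) returns $\graph-w$, which is connected because $\graph$ is biconnected, and a graph is connected whenever its contraction along an edge is connected; equivalently, any path of $\graph-w$ lifts to a path of $\graph^*-w$ after inserting, where needed, one of the new edges joining $v_i$ to $v_{\vertex+1}$. With that replacement your argument becomes a complete, self-contained proof, more detailed than the paper's appeal to its earlier lemma (which itself leaves the final step ``$2$-edge connected, hence biconnected here'' implicit).
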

\begin{proof}
Let  
$\graph$ denote a graph in $\Vb^{\vertex, \lp,\ext}$.  
Apply the map $\Qti\defeq\frac{1}{\two(\multiedges-1)!}\Ein^\multiedges\circ\sione$ to the graph $\graph$. 
  $\sione(\graph)$ is a linear combination of connected graphs. Therefore, by Lemma 5 of \cite{Me:classes} the graphs in $\Qti(\graph)$ are $\two$-edge connected. Clearly, they must be biconnected in particular. 
\end{proof}
\begin{lem}\label{lem:edgeconntobiconn}
Fix integers $\ext\ge0$, $\lp,>0$ and $\vertex>1$. (a) Let $\graph$ denote a graph in $\Ve^{\vertex,\lp,\ext}$ with only one cut vertex. Then, $\Qcut(\graph)\in\Vb^{\vertex+1,\lp+\multiedges-1,\ext}$. (b) Let $\graph$ denote a graph in $\Ve^{\vertex,\lp,\ext}$ with at least two cut vertices. Then, $\Qti(\graph)-\Qtifac(\graph)\in\Ve^{\vertex+1,\lp+\multiedges-1,\ext}\backslash \Vb^{\vertex+1,\lp+\multiedges-1,\ext}$.
\end{lem}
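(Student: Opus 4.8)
I would treat (a) and (b) one at a time, in each case fixing an arbitrary graph $\graph^*$ produced by the relevant map, doing the bookkeeping (vertex count, cyclomatic number, labelled external edges), and then checking the connectivity claims by hand.

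\textbf{Part (a).} The map $\Qcut$ is the localisation of $\Qtifac$ at the unique cut vertex $v_c$ of $\graph$: it splits $v_c$ into $v_c$ and $v_{\vertex+1}$ by every bipartition of the ends of internal edges at $v_c$ in which each biconnected component at $v_c$ receives ends on both sides, joins $v_c$ to $v_{\vertex+1}$ by $\multiedges$ parallel internal edges, and distributes the external edges formerly at $v_c$ between $v_c$ and $v_{\vertex+1}$. First I would record two structural facts. Since $\graph\in\Ve^{\vertex,\lp,\ext}$ has no bridge, every biconnected component of $\graph$ has cyclomatic number $\ge1$, at least two vertices, and at least two ends of internal edges at $v_c$; in particular $\Qcut(\graph)\neq 0$ and $\partiifac^\two$ at $v_c$ is nonempty. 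And since $v_c$ is the \emph{only} cut vertex, the block--cut tree of $\graph$ is a star centred at $v_c$, so the biconnected components $B^1,\dots,B^{\vertex_c}$ ($\vertex_c\ge2$) at $v_c$ exhaust all biconnected components of $\graph$. Then a typical term $\graph^*$ lies in $\Vc^{\vertex+1,\lp+\multiedges-1,\ext}$: the split graph is connected, because each $B^j$ with $v_c$ split still links $v_c$ to $v_{\vertex+1}$ through the connected graph $B^j\setminus\{v_c\}$, so the cyclomatic number drops by one under the split and returns to $\lp+\multiedges-1$ once the $\multiedges$ new edges are added; the vertex count becomes $\vertex+1$, and the labelled external edges are merely relocated.

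\textbf{Biconnectedness in (a) — the main obstacle.} For each $j$ let $\hat B^j\subseteq\graph^*$ consist of the (split) vertices and internal edges of $B^j$ together with the $\multiedges$ new edges joining $v_c$ and $v_{\vertex+1}$. Then $\hat B^j$ is precisely one of the graphs occurring in $\Qti$ applied to the biconnected graph $B^j$ at $v_c$, so Lemma~\ref{lem:biconntobiconn} shows $\hat B^j$ is biconnected. Since distinct $B^j,B^{j'}$ met only at $v_c$ in $\graph$, the $\hat B^j$ pairwise meet in exactly the adjacent pair $\{v_c,v_{\vertex+1}\}$ and the $\multiedges$ edges between them, and $\graph^*$ with its external edges removed equals $\bigcup_j\hat B^j$. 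So (a) reduces to the elementary gluing fact: a union of biconnected graphs, pairwise meeting in one fixed adjacent pair of vertices, is biconnected. I would prove this by deleting an arbitrary vertex $x$ of the union and treating the two cases $x\in\{v_c,v_{\vertex+1}\}$ and $x$ interior to a single $\hat B^j$, each of which collapses to the biconnectedness of the individual pieces; re-attaching the external edges changes nothing. I expect this gluing step to be the main point; everything else is bookkeeping or a direct appeal to Lemma~\ref{lem:biconntobiconn}.

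\textbf{Part (b).} I would first note that we may assume $v_i$ is a cut vertex of $\graph$, since otherwise $v_i$ lies in a single biconnected component, $\partiifac^\two$ coincides with the full set of admissible bipartitions, and $\Qti(\graph)-\Qtifac(\graph)=0$. Then every graph $\graph^*$ occurring in $\Qti(\graph)-\Qtifac(\graph)$ (each with coefficient $\tfrac1{\two(\multiedges-1)!}$, with no cancellation since distinct bipartitions give distinct labelled graphs) is obtained by splitting $v_i$ into $v_i,v_{\vertex+1}$ by a bipartition of its internal-edge ends in which \emph{some} biconnected component at $v_i$ lies entirely on one side, followed by $\multiedges$ parallel edges between $v_i$ and $v_{\vertex+1}$; the bookkeeping ($\vertex+1$ vertices; cyclomatic number $\lp+\multiedges-1$ whether or not the split disconnects; $\ext$ labelled external edges) is as in (a). For $\two$-edge connectedness of $\graph^*$ I would delete an arbitrary internal edge $e$: if $e$ is one of the $\multiedges$ new edges then, using $\multiedges\ge2$ so that the bundle is bridgeless, the remaining copies keep $v_i,v_{\vertex+1}$ joined; if $e$ is any other edge, then $e$ is an edge of $\graph$, so $\graph-e$ is connected, and $\graph^*-e$ is obtained from $\graph-e$ by splitting $v_i$ and then adding $\ge1$ edge between the two resulting pieces (splitting a vertex of a connected graph yields at most two components), so $\graph^*-e$ is connected. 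For failure of biconnectedness I would use a cut vertex $v_k\neq v_i$ of $\graph$, which exists by hypothesis: in $\graph-v_k$ the vertex $v_i$ lies in a single component, and the splitting at $v_i$ only inserts or relocates edges among $v_i$, $v_{\vertex+1}$ and the neighbours of $v_i$, all inside that component or at $v_k$; hence in $\graph^*-v_k$ the other components of $\graph-v_k$ stay separated, so $v_k$ is still a cut vertex and $\graph^*\notin\Vb^{\vertex+1,\lp+\multiedges-1,\ext}$. This gives $\Qti(\graph)-\Qtifac(\graph)\in\Ve^{\vertex+1,\lp+\multiedges-1,\ext}\setminus\Vb^{\vertex+1,\lp+\multiedges-1,\ext}$.
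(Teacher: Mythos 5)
Your part (a) is sound: the reduction to Lemma~\ref{lem:biconntobiconn} applied block by block, followed by the gluing fact that a union of biconnected graphs pairwise meeting in a fixed adjacent pair $\{v_c,v_{\vertex+1}\}$ is biconnected, is a complete argument (the paper itself offers no proof to compare against, since it declares the lemma trivial). Likewise, your non-biconnectedness argument in (b) --- a cut vertex $v_k\neq v_i$ of $\graph$ survives in every term because the split and the new edges stay inside the component of $\graph-v_k$ containing $v_i$ --- is correct and works for every $\multiedges\ge1$.

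The gap is in the $\two$-edge-connectedness half of (b). You write ``using $\multiedges\ge2$ so that the bundle is bridgeless'', but the lemma only assumes $\multiedges>0$, and in formula (\ref{eq:recbiconn}) the sum over $\multiedges$ starts at $1$, so $\multiedges=1$ cannot be excluded silently. For $\multiedges=1$ your case analysis breaks precisely on the terms that distinguish $\Qti$ from $\Qtifac$: if $v_i$ is a cut vertex and the bipartition of the ends at $v_i$ sends every biconnected component at $v_i$ entirely to one side or the other (a partition lying in $\partii^\two\setminus\partiifac^\two$, hence occurring in $\Qti(\graph)-\Qtifac(\graph)$ with positive coefficient and no cancellation), then $\sione$ disconnects the graph --- distinct blocks at a cut vertex lie in distinct components of $\graph-v_i$ --- and the single edge added by $\Ein$ is a bridge. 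Concretely, take $\graph$ a chain of three triangles (two cut vertices), split one cut vertex along its two blocks with $\multiedges=1$: the resulting term is connected but not $\two$-edge connected. So for $\multiedges=1$ only the ``not biconnected'' conclusion survives; your proof needs either the extra hypothesis $\multiedges\ge2$, or a separate treatment of $\multiedges=1$ with a correspondingly weakened conclusion. You should flag this explicitly: it is not a removable shortcut in your argument but an actual failure of the $\Ve^{\vertex+1,\lp+\multiedges-1,\ext}$-membership as literally stated, albeit one that is harmless for Theorem~\ref{thm:biconn}, which only uses the fact that such terms are never biconnected.
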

\begin{proof}
The proof  is trivial.
\end{proof}
\begin{lem}\label{lem:edgeconntoedgeconn}
Fix integers $\ext\ge0$, $\lp,\lp'>0$ and $\vertex,\vertex'>1$. Let $\graph$ denote a graph in $\Vb^{\vertex',\lp',0}$. Then, for all $i\in\{1,\ldots,\vertex\}$, $\ri^{\graph}(\Q\Ve^{\vertex,\lp,\ext})\subseteq\Q\Ve^{\vertex+\vertex'-1,\lp+\lp',\ext}$. 
\end{lem}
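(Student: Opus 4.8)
# Proof Proposal for Lemma~\ref{lem:edgeconntoedgeconn}

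The plan is to show that applying $\ri^{\graph}$ to a $\two$-edge connected graph cannot create a bridge (an internal edge whose deletion disconnects the graph), while obviously preserving connectedness and correctly accounting for the cyclomatic number and the external-edge count. Let $H\in\Ve^{\vertex,\lp,\ext}$ and fix $i\in\{1,\dots,\vertex\}$. By definition, $\ri^{\graph}(H)$ is a linear combination of graphs $H_{(\mathcal{B}_i^{(1)},\dots,\mathcal{B}_i^{(\uu)})}$, each obtained by taking the biconnected graph $\graph\in\Vb^{\vertex',\lp',0}$ (after relabeling via $\shift$), and redistributing the biconnected components of $H$ that meet $v_i$ — together with the internal edges of $H$ incident to $v_i$ that lead into those components, and the external edges at $v_i$ — among the $\vertex'$ vertices of $\shift(\graph)$. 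So the first step is simply to record that each summand is a well-defined connected graph: connectedness holds because $\shift(\graph)$ is connected (it is biconnected), and every piece of $H$ that was attached at $v_i$ remains attached to one of the vertices of $\shift(\graph)$, so no part of $H$ is detached.

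The second step is the bookkeeping of the parameters $(\vertex+\vertex'-1,\lp+\lp',\ext)$. The vertex $v_i$ is removed and replaced by the $\vertex'$ vertices of $\shift(\graph)$, giving $\vertex-1+\vertex'$ vertices; no internal edge of $H$ is deleted or created, but $\graph$ contributes $\card(\Inth)=\lp'+\vertex'-1$ new internal edges, so the new internal-edge count is $(\lp+\vertex-1)+(\lp'+\vertex'-1)=(\lp+\lp')+(\vertex+\vertex'-1)-1$, which by the formula $\lp=\edge-\vertex+1$ for connected graphs forces the new cyclomatic number to be exactly $\lp+\lp'$. The external edges of $H$ are neither created nor destroyed (the map $\extb_{\edgesie,\hat V}$ only moves the external edges at $v_i$ onto the vertices of $\shift(\graph)$, preserving their labels), so the external-edge count stays $\ext$ with the same labeling. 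Hence each summand lies in $\Vc^{\vertex+\vertex'-1,\lp+\lp',\ext}$ with the correct data, and it remains only to upgrade ``connected'' to ``$\two$-edge connected.''

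The third and main step is the no-bridge argument, and this is where the real work lies. Take any internal edge $\legs$ of a summand $H^* = H_{(\mathcal{B}_i^{(1)},\dots,\mathcal{B}_i^{(\uu)})}$; I must show $H^*\setminus\legs$ is connected. There are two cases. If $\legs$ belongs to $\Inth$, i.e. it comes from the biconnected graph $\graph$, then since $\graph$ (and hence $\shift(\graph)$) is biconnected it is in particular $\two$-edge connected, so $\shift(\graph)\setminus\legs$ is connected; and every attached fragment of $H$ at each vertex of $\shift(\graph)$ stays connected through $\shift(\graph)\setminus\legs$, so $H^*\setminus\legs$ is connected. If $\legs$ is an internal edge inherited from $H$, it lies in some biconnected component $B$ of $H$. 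If $B\notin\mathcal{B}_i$ (it does not touch $v_i$), then $B$ is untouched by the construction; since $B$ is biconnected, $B\setminus\legs$ is connected, and $B$ reattaches to the rest of $H^*$ exactly as it did in $H$, which was $\two$-edge connected, so no bridge is created. The delicate subcase is $B\in\mathcal{B}_i$: here $B$ has been ``translated'' so that its copy of $v_i$ now sits at some vertex $w$ of $\shift(\graph)$; but $B$ is still intact as a subgraph (the construction only changes \emph{which} ambient vertex plays the role of $v_i$ in $B$, not $B$'s internal structure), so $B\setminus\legs$ is again connected because $B$ is biconnected, and $B$ remains attached to the rest of $H^*$ at $w$, hence to all of $H^*$. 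I would phrase this uniformly by observing that the block–cut-tree of $H^*$ is obtained from that of $H$ by subdividing the ``cut vertex $v_i$'' node according to the partition $(\mathcal{B}_i^{(1)},\dots,\mathcal{B}_i^{(\uu)})$ and grafting in the block $\shift(\graph)$; since $\shift(\graph)$ is itself a single block and all original blocks of $H$ are preserved as blocks, $H^*$ has no bridge iff $H$ has none, which is the hypothesis. The main obstacle is making this last identification of block structure precise — in particular verifying that a biconnected component $B\in\mathcal{B}_i$ of $H$ remains a biconnected component of $H^*$ (it could, a priori, merge with $\shift(\graph)$ if the attaching were done wrongly, but since $B$ meets $\shift(\graph)$ in the single vertex $w$, it cannot), and checking the edge cases $\vertex=1$ or $\edgesie=\emptyset$ or $\mathcal{B}_i=\emptyset$ where the construction degenerates. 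Finally, extend to $\Q\Ve^{\vertex,\lp,\ext}$ by linearity, which is immediate since $\ri^{\graph}$ is defined to be linear and each basis summand was shown to land in $\Ve^{\vertex+\vertex'-1,\lp+\lp',\ext}$.
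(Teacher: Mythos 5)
Your proof is correct: the parameter bookkeeping, and the observation that every internal edge of a resulting graph lies either in the biconnected block $\shiftj(\graph)$ or in an intact biconnected component of the original graph (so no bridge can arise), is exactly the verification the paper leaves implicit by declaring the proof trivial. The only point worth making explicit is that the hypothesis $\lp'>0$ is what guarantees $\graph\in\Vb^{\vertex',\lp',0}$ has no bridge itself (it excludes the path $P_\two$, which is biconnected in the paper's sense but not $\two$-edge connected); with that remark your argument is complete.
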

\begin{proof}
The proof  is trivial.
\end{proof}

\section{Recursion formulas}
\label{sec:rec}
We give a recursion formula to generate all equivalence classes of biconnected graphs.  In a recursion step, the formula yields a linear combination of  biconnected graphs with the same vertex and cyclomatic numbers. The key feature is that  
the sum of the (rational)
coefficients of all graphs in the same equivalence class corresponds to  the inverse of the order of
their  group of automorphisms. We extend the result to connected or $\two$-edge connected graphs. The underlying algorithms are amenable  to direct implementation via coalgebra in the sense of \cite{Me:coalgebra}.
\subsection{Biconnected graphs}
\label{sec:biconn}
We pick out the terms that generate biconnected graphs in  formula (5) of \cite{Me:classes}.

\bigskip

\begin{thm}\label{thm:biconn}
Fix an integer $\ext\ge0$.    
For all integers $\lp\ge0$ and $\vertex>1$,   define $\Db^{\vertex,\lp,\ext} \in \Q\Vb^{\vertex,\lp,\ext}$
by the following recursion relation:
\begin{itemize}
\item
$\Db^{\two,\lp,\ext}\defeq\frac{1}{\two(\lp+1)!}\extb_{\Ext,\{v_1,v_\two\}}(l_{1,\two}^{\lp}(P_\two))$, where $P_{\two}\in\Vb^{\two,0,0}$ denotes a path on two vertices;
\item $\Db^{\vertex,0,\ext}\defeq 0$, $\vertex> \two$; 
\item
\begin{eqnarray}\nonumber
\lefteqn{\Db^{\vertex,\lp,\ext}  \defeq
\frac{1}{\lp+\vertex-1}\biggl(\sum_{\multiedges=1}^{\lp+1}
\sum_{i=1}^{\vertex-1}\Qti(\Db^{\vertex-1,\lp+1-\multiedges,\ext})+}\\
&&\sum_{j=2}^{\vertex-2}\sum_{\multiedges=1}^{\lp-j+1}\Qcut(\Daux_j^{\vertex-1,\lp+1-\multiedges,\ext})\biggr)\,, 
 \label{eq:recbiconn}
\end{eqnarray}
\end{itemize}
where for all integers $j>1$, $\vertex\ge j+1$ and $\lp \ge j$, $\Daux_j^{\vertex,\lp,\ext}$ is given by the following recursion relation:
\begin{itemize}
\item \begin{equation}\label{eq:aux2}
\Daux_2^{\vertex,\lp,\ext}\defeq\frac{1}{\lp+\vertex-1}\sum_{\lp'=1}^{\lp-1}\sum_{\vertex'=\two}^{\vertex-1}\sum_{i=1}^{\vertex-\vertex'+1}\biggl((\lp'+\vertex'-1)\ri^{\Db^{\vertex',\lp',0}}(\Db^{\vertex-\vertex'+1,\lp-\lp',\ext})\biggr)\,;
\end{equation}

\item \begin{equation}\label{eq:auxj}
\Daux_j^{\vertex,\lp,\ext}\defeq\frac{1}{\lp+\vertex-1}\sum_{\lp'=1}^{\lp-1}\sum_{\vertex'=\two}^{\vertex-1}\biggl((\lp'+\vertex'-1)\rboxfac^{\Db^{\vertex',\lp',0}}(\Daux_{j-1}^{\vertex-\vertex'+1,\lp-\lp',\ext})\biggr)\,.
\end{equation}
\end{itemize}
Then, for fixed values of  $\vertex$ and $\lp$, $\Db^{\vertex,\lp,\ext}=\sum_{\graph\in \Vb^{\vertex,\lp,\ext}}\alpha_\graph\, \graph$;  $\alpha_\graph\in\Q$  for all $\graph\in\Vb^{\vertex,\lp,\ext}$. Moreover, given an arbitrary equivalence class $\C\subseteq\Vb^{\vertex,\lp,\ext}$, the following holds: (i) There exists $\graph\in\C$ so that $\alpha_\graph>0$; (ii) $\sum_{\graph\in\C}\alpha_\graph=1/\vert\aut(\C)\vert$. 
\end{thm}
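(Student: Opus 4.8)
The plan is to proceed by induction on the number of internal edges $\edge = \lp+\vertex-1$ of a representative graph $\graph$, following the strategy outlined in the introduction and already used in \cite{MeOe:npoint} and \cite{Me:classes}. The base cases are the $\vertex=2$ family and the $\vertex>2$, $\lp=0$ family: for $\Db^{\two,\lp,\ext}$ one checks directly that the single biconnected graph on two vertices with $\lp+1$ multiple internal edges and $\ext$ labeled external edges distributed among $v_1,v_\two$ has automorphism group of order $\two\cdot(\lp+1)!\cdot(\text{stabilizer of the external-edge distribution})$, and that the coefficient $\frac{1}{\two(\lp+1)!}\extb_{\Ext,\{v_1,v_\two\}}(l_{1,\two}^{\lp}(P_\two))$ reproduces exactly $1/\vert\aut(\C)\vert$ after summing over the isomorphic copies produced by $\extb$; the case $\Db^{\vertex,0,\ext}=0$ is vacuous since there are no biconnected graphs on more than two vertices with cyclomatic number $0$. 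Positivity (claim (i)) will follow in each inductive step because $\Qti$, $\Qcut$, $\ri$ and $\rboxfac$ all have nonnegative coefficients and the recursion sums such terms with positive rational weights, so some copy of any generated class receives a strictly positive coefficient; the real content is claim (ii).

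For the inductive step I would fix an equivalence class $\C\subseteq\Vb^{\vertex,\lp,\ext}$ with representative $\graph$ on $\edge=\lp+\vertex-1$ internal edges and show that the total coefficient $\sum_{\graph'\in\C}\alpha_{\graph'}$ equals $1/\vert\aut(\C)\vert$ by a ``counting internal edges'' argument: each internal edge $e$ of $\graph$ should be shown to contribute exactly $\frac{1}{\edge\cdot\vert\aut(\graph)\vert}$ to that sum, so that the $\edge$ edges together give $1/\vert\aut(\graph)\vert = 1/\vert\aut(\C)\vert$. Concretely, deleting $e$ (or the biconnected component that $e$ belongs to, once one identifies when collapsing along $e$ versus contracting a larger biconnected piece is the right inverse operation) yields either a biconnected graph on $\vertex-1$ vertices — handled by the $\Qti$ terms and the inductive hypothesis for $\Db^{\vertex-1,\lp+1-\multiedges,\ext}$ — or a $\two$-edge connected graph with a single cut vertex whose two biconnected components are pieced back together by $\Qcut$, which is where the auxiliary quantities $\Daux_j$ enter. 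I would set up a bijection between (orbits of) internal edges of $\graph$ and the terms of the recursion that can produce a copy of $\graph$, using the contraction map $\contraction$ and the split maps $\sione,\sifac$ as the formal inverses of $\Qti,\Qcut$; the orbit-stabilizer theorem then converts the count of preimages into the ratio of automorphism-group orders, exactly as in Lemma-style arguments of \cite{Me:classes}.

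The key technical device, as the introduction stresses, is that a vertex carrying a labeled external edge is fixed by every automorphism; I would exploit this by tracking, through each application of $\Qti$ or $\Qcut$ (and the $\ri$, $\rboxfac$ maps inside $\Daux_j$), how $\vert\autv\vert$ and $\vert\aute\vert$ of the produced graphs relate to those of the input, using $\vert\aut(\graph)\vert=\vert\autv(\graph)\vert\cdot\vert\aute(\graph)\vert$. The factor $\frac{1}{\two(\multiedges-1)!}$ in $\Qti$ is precisely calibrated to absorb the $\aute$-automorphisms permuting the $\multiedges$ parallel new edges and the $\two$ from the symmetry of the split vertex, while the leading $\frac{1}{\lp+\vertex-1}=\frac{1}{\edge}$ in the recursion is what implements the ``each of the $\edge$ internal edges contributes $1/\edge$ of the mass'' bookkeeping. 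The main obstacle will be the $\Qcut$ branch together with the nested recursion for $\Daux_j$: one must show that when $e$ lies in a biconnected component $\graph_i^j$ of $\graph$ other than a ``trivial'' one, reattaching that component via $\ri^{\Db^{\vertex',\lp',0}}$ (or $\rboxfac$ at deeper levels) reproduces each copy of $\graph$ with the correct multiplicity, and in particular that the weight $(\lp'+\vertex'-1)$ inside \eqref{eq:aux2} and \eqref{eq:auxj} correctly cancels the $\frac{1}{\lp'+\vertex'-1}$ that the inductive hypothesis attaches to $\Db^{\vertex',\lp',0}$, so that the double-counting over which biconnected piece one ``peels off'' is neither over- nor under-weighted; carefully disentangling the interaction between the automorphisms of the glued-in biconnected component, the automorphisms permuting isomorphic biconnected components at the cut vertex, and the distribution map $\extb$ acting on external edges is where the proof has to be most careful.
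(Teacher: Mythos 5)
There is a genuine gap, and it sits exactly where you flag ``the main obstacle'': your outline never establishes what the auxiliary quantities $\Daux_j^{\vertex,\lp,\ext}$ actually are, and without that the $\Qcut$ branch of the recursion cannot be verified. The paper's proof hinges on the identification of $\Daux_j^{\vertex,\lp,\ext}$ as the sum over all equivalence classes of $\two$-edge connected graphs with exactly one cut vertex, $j$ biconnected components, $\vertex$ vertices, cyclomatic number $\lp$ and $\ext$ external edges, each class weighted by the inverse of the order of its automorphism group; this is obtained as a particular case of Theorem~\ref{thm:conn}, which is proved by a \emph{different} induction -- on the number of biconnected components, not on the number of internal edges -- using the contraction map $\contraction$, the relations $\vert\aut(\B)\vert=\vert\aut(\C)\vert/\vert\aut(\A)\vert$ and $\vert\aut(\D)\vert=\vert\aut(\A)\vert\cdot\vert\autv(\D)\vert$, and the count $\vert\autv(\D)\vert$ of ways the maps $\ri^{\graph'}$ reproduce a given class. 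Your induction on the edge number of biconnected graphs cannot reach this statement by itself, because the recursion is mutual: $\Db^{\vertex,\lp,\ext}$ calls $\Daux_j$, while $\Daux_j$ is built by applying $\ri^{\Db^{\vertex',\lp',0}}$ and $\rboxfac^{\Db^{\vertex',\lp',0}}$ to graphs that are not biconnected, so a separate inductive statement (and a well-founded ordering covering it) is required, together with the sorting Lemmas~\ref{lem:biconntobiconn} and \ref{lem:edgeconntobiconn} that tell which terms of the $\Qti$/$\Qcut$ applications land in $\Q\Vb$ at all. Once that identification is in hand, the paper does not redo the per-edge counting you sketch: it imports it wholesale from Theorem 18 of \cite{Me:classes}, of which formula (\ref{eq:recbiconn}) is the ``biconnected part''. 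Your reading of the weight $(\lp'+\vertex'-1)$ as cancelling the normalization of $\Db^{\vertex',\lp',0}$ is also off: in the actual computation it plays the role of the internal edge number $\edge'$ of the glued component (giving the contribution $\edge'/(\edge\cdot\vert\aut(\C)\vert)$), while the $1/\vert\aut(\D)\vert$ carried by $\Db^{\vertex',\lp',0}$ is cancelled against the multiplicity $\vert\autv(\D)\vert$ and the ratio $\vert\aut(\A)\vert/\vert\aut(\C)\vert$.

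Two smaller but real gaps: claim (i) does not follow from the maps having nonnegative coefficients -- nonnegativity shows nothing is cancelled, but you must still exhibit, for \emph{every} equivalence class, some preimage under the recursion that occurs with nonzero coefficient (the paper's Lemma~\ref{lem:allconn}-type argument: contract a biconnected component, resp.\ invert the vertex split, and invoke the inductive hypothesis). And the statement is for arbitrary $\ext\ge 0$, whereas the automorphism counting you describe is only valid when every vertex carries a labeled external edge, so $\vert\autv\vert=1$; you need the transfer step (the analogues of Lemmas~\ref{lem:facconn} and \ref{lem:anyclass}, i.e.\ compatibility of the recursion with $\extb_{\Ext',V}$ and removal of the extra labels) to come back to general $\ext$, which your proposal alludes to but does not set up.
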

In formula (\ref{eq:recbiconn}), the 
$\Qcut$ summand does not appear when 
$\vertex<4$ or $\lp<\two$.  

\begin{proof}
Note that $\Daux_j^{\vertex,\lp,\ext}$ is the linear combination of all equivalence classes of $\two$-edge connected graphs with only one cut vertex, $j$ biconnected components, $\vertex$ vertices, cyclomatic number $\lp$ and $\ext$ external edges,  with coefficient given by the inverse of the order of their automorphism group. This is a particular case of Theorem \ref{thm:conn} to be given later on. We refer the reader to the next section for the proof. Thus, by Lemmas \ref{lem:biconntobiconn} and \ref{lem:edgeconntobiconn}, the proof of Theorem \ref{thm:biconn} follows straightforwardly from that of Theorem  18 of \cite{Me:classes}.
\end{proof}

\subsubsection{Examples}\label{sec:appbiconn}
We show the result of computing all mutually non-isomorphic biconnected graphs without external edges as  contributions to $\Db^{\vertex,\lp,0}$ via formula  (\ref{eq:recbiconn}) up to order $\two\le\vertex+\lp\le6$. The coefficients in front  of graphs are the
inverses of the orders of their groups of automorphisms. 
\vspace{1cm}\\
\includegraphics{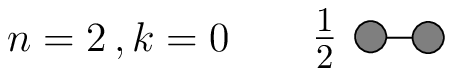}
\vspace{1cm}\\
\includegraphics{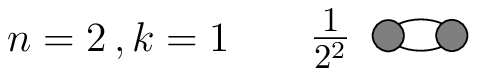}
\vspace{1cm}\\
\includegraphics{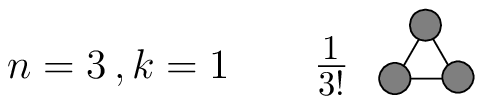}
\vspace{1cm}\\
\includegraphics{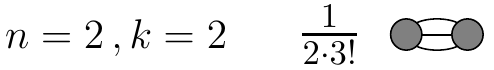}
\vspace{1cm}\\
\includegraphics{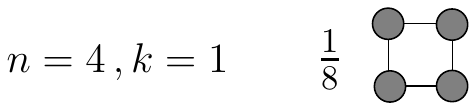}
\vspace{1cm}\\
\includegraphics{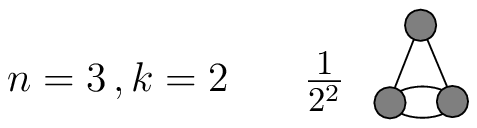}
\vspace{1cm}\\
\includegraphics{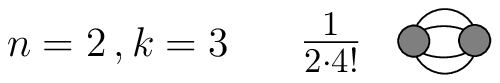}
\vspace{1cm}\\
\includegraphics{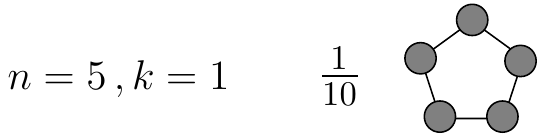}
\vspace{1cm}\\
\includegraphics{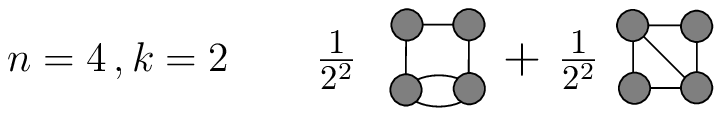}
\vspace{1cm}\\
\includegraphics{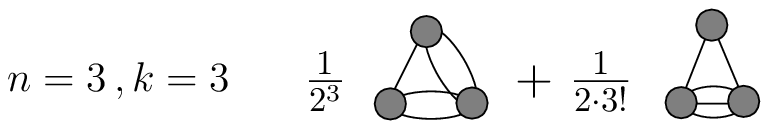}
\vspace{1cm}\\
\includegraphics{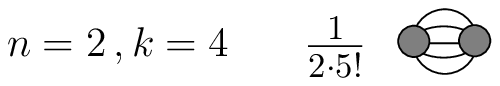}
\vspace{1cm}\\

\subsection{Connected graphs}
\label{sec:conn}
We use  the maps $\ri^{\Db^{\vertex,\lp,\ext}}$ and the linear combination of graphs $\Db^{\vertex,\lp,\ext}\in \Q\Vb^{\vertex,\lp,\ext}$ given by formulas (\ref{eq:Qgen}) and (\ref{eq:recbiconn}), respectively, to generate all equivalence classes of connected graphs. The  underlying algorithm is so that generated graphs are automatically decomposed into their biconnected components.

\begin{thm}\label{thm:conn}
Fix an integer $\ext\ge0$.   
For all integers $\lp\ge0$ and $\vertex> 1$,
define $\Dc^{\vertex,\lp,\ext} \in \Q\Vc^{\vertex,\lp,\ext}$
by the following recursion relation:
\begin{itemize}
\item
$\Dc^{\two,\lp,\ext}\defeq\Db^{\two,\lp,\ext}$; 
\item
\begin{eqnarray}\nonumber
\lefteqn{\Dc^{\vertex,\lp,\ext}\defeq
\Db^{\vertex,\lp,\ext}+\frac{1}{\lp+\vertex-1}\cdot}\\\label{eq:recconn}
&&\sum_{\lp'=0}^{\lp}\sum_{\vertex'=\two}^{\vertex-1}\sum_{i=1}^{\vertex-\vertex'+1}\biggl((\lp'+\vertex'-1)\ri^{\Db^{\vertex',\lp',0}}(\Dc^{\vertex-\vertex'+1,\lp-\lp',\ext})\biggr), \vertex>\two\,.
 \end{eqnarray}
\end{itemize}
Then, for fixed values of  $\vertex$ and $\lp$, $\Dc^{\vertex,\lp,\ext}=\sum_{\graph\in \Vc^{\vertex,\lp,\ext}}\alpha_\graph\, \graph$;  $\alpha_\graph\in\Q$  for all $\graph\in\Vc^{\vertex,\lp,\ext}$. Moreover, given an arbitrary equivalence class $\C\subseteq\Vc^{\vertex,\lp,\ext}$, the following holds: (i) There exists $\graph\in\C$ so that $\alpha_\graph>0$; (ii) $\sum_{\graph\in\C}\alpha_\graph=1/\vert\aut(\C)\vert$. 
\end{thm}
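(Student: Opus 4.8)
The plan is to mirror the inductive argument used for biconnected graphs in Theorem~\ref{thm:biconn} (which in turn follows the scheme of Theorem~18 of \cite{Me:classes}), now counting edge-contributions across all biconnected components of a connected graph rather than just internal edges of a single block. First I would set up the induction: we induct on $\vertex$ (with $\lp$ and $\ext$ fixed), the base case $\vertex=2$ being immediate since $\Dc^{2,\lp,\ext}=\Db^{2,\lp,\ext}$ and Theorem~\ref{thm:biconn} already gives the claim there. For the inductive step, fix an equivalence class $\C\subseteq\Vc^{\vertex,\lp,\ext}$ with representative $\graph$ on $\edge=\lp+\vertex-1$ internal edges. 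The strategy, as announced in the introduction, is to show that each of the $\edge$ internal edges of $\graph$ contributes exactly $1/(\edge\cdot|\aut(\graph)|)$ to $\sum_{\graph'\in\C}\alpha_{\graph'}$; summing over the $\edge$ edges and over the $|\C|=|\aut(\graph)|\cdot(\text{orbit-counting factor})$ isomorphic copies then yields $1/|\aut(\C)|$. Here I would invoke the standard fact (used throughout \cite{Me:classes}) that vertices carrying labeled external edges are fixed by every automorphism, so that the external-edge labeling rigidifies the relevant symmetry bookkeeping.

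Next I would dissect formula~(\ref{eq:recconn}) term by term. The $\Db^{\vertex,\lp,\ext}$ summand, by Theorem~\ref{thm:biconn}, accounts exactly for those classes $\C$ whose representative is itself biconnected; for such $\graph$ the second sum contributes nothing new (one must check the $\ri^{\Db^{\vertex',\lp',0}}$ terms produce only graphs with $\ge 2$ biconnected components, hence land outside the biconnected locus), so the claim reduces to Theorem~\ref{thm:biconn}. The substantive case is when $\graph$ has $\comp\ge 2$ biconnected components. Then I would argue that every internal edge $e$ of $\graph$ lies in a unique biconnected component $\hat{\graph}$; contracting all \emph{other} components appropriately — equivalently, viewing $\graph$ as obtained by gluing $\hat{\graph}$ back onto the smaller connected graph $\graph/\hat{\graph}\in\Vc^{\vertex-\vertex'+1,\lp-\lp',\ext}$ via the map $\ri^{\hat{\graph}}$ at one of its vertices — shows that the edge $e$'s contribution is recorded precisely once in the triple sum $\sum_{\lp'}\sum_{\vertex'}\sum_i$, with $\vertex'=|V(\hat{\graph})|$ and $\lp'$ its cyclomatic number. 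The weight $(\lp'+\vertex'-1)$ is the number of internal edges of $\hat{\graph}$, and dividing by $\lp+\vertex-1$ distributes the contribution correctly: an edge inside $\hat{\graph}$ is counted (inductively) as contributing $1/((\lp'+\vertex'-1)|\aut(\graph/\hat{\graph})\wr\dots|)$ within the $\Dc^{\vertex-\vertex'+1,\lp-\lp',\ext}$ factor, and the prefactor rescales this to the correct $1/(\edge\cdot|\aut(\graph)|)$. I would make the automorphism bookkeeping precise by relating $\aut(\graph)$ to the automorphisms of $\graph/\hat{\graph}$, of $\hat{\graph}$, and of the "block tree", exactly as the definition of $\ri^{\graph'}$ (which distributes biconnected components over the vertices of a biconnected graph "in all possible ways") is designed to handle.

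The main obstacle I expect is the automorphism-counting at the gluing vertex: when the connected graph $\Dc^{\vertex-\vertex'+1,\lp-\lp',\ext}$ has a vertex $v_i$ fixed by some of its automorphisms, or when several isomorphic copies of $\hat{\graph}$ are attached at the same vertex, the map $\ri^{\Db^{\vertex',\lp',0}}$ can create coincidences that either multiply-count a class or shuffle coefficients within an orbit. Controlling this is exactly the role of the "all possible ways" distribution in the definition of $\ri$ together with the symmetry-factor normalization $1/(2(\rho-1)!)$ built into $\Qi$; I would verify that the combinatorial factor $(\lp'+\vertex'-1)/(\lp+\vertex-1)$, the sum over $i\in\{1,\dots,\vertex-\vertex'+1\}$, and the automorphisms of $\hat{\graph}$ acting on its gluing vertex together conspire so that each oriented "(block $\hat{\graph}$, attachment site, residual graph)" triple is counted with total weight $1/|\aut(\graph)|$ — the same miracle that makes Theorem~18 of \cite{Me:classes} and Theorem~\ref{thm:biconn} work. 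Once this local-global automorphism identity is established, parts (i) and (ii) of the conclusion follow: positivity of some $\alpha_{\graph}$ because the base biconnected pieces have positive coefficients and the maps $\ri$ preserve at least one positive term, and the sum-of-coefficients identity by the edge-counting argument above. Finally I would note that this same analysis, specialized to the $\two$-edge connected locus, simultaneously proves the claim quoted inside the proof of Theorem~\ref{thm:biconn} that $\Daux_j^{\vertex,\lp,\ext}$ enumerates $\two$-edge connected graphs with exactly one cut vertex and $j$ blocks, so the two theorems are really proved in tandem.
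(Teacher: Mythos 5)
Your overall plan coincides with the paper's: reduce a connected graph to one with fewer blocks by contracting a biconnected component $\hat{\graph}$, recognize the weight $(\lp'+\vertex'-1)/(\lp+\vertex-1)=\edge'/\edge$ as attributing to each block its share of internal edges, and conclude that every internal edge contributes $1/(\edge\cdot\vert\aut(\C)\vert)$. However, the step you yourself flag as the ``main obstacle'' --- the automorphism bookkeeping at the gluing vertex --- is exactly the substantive content of the proof, and you leave it as a ``miracle'' to be verified rather than proving it. The paper resolves it by a specific device that is absent from your proposal: it first proves the coefficient identity only for classes in which \emph{every} vertex carries at least one labeled external edge (Lemma~\ref{lem:distconn}, under $\ext\ge\vertex$ and $V\cap\mape(\Ext)=V$). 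In that situation all vertex symmetries are killed, so $\vert\aut(\C)\vert=\vert\aute(\C)\vert$, the contracted class $\B$ satisfies the clean identity $\vert\aut(\B)\vert=\vert\aut(\C)\vert/\vert\aut(\A)\vert$ (with $\A$ the class of the decorated block $\hat{\graph}$), the stripped block $\graph'\cong\extc(\hat{\graph})$ in class $\D$ satisfies $\vert\aut(\D)\vert=\vert\aut(\A)\vert\cdot\vert\autv(\D)\vert$, and --- crucially --- the number of terms of $\ri^{\graph'}(\graphH^*)$ landing in $\C$ is exactly $\vert\autv(\D)\vert$, because distributing the labeled external edges of the cut vertex over the vertices of the inserted block is rigid up to the vertex symmetries of $\graph'$. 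Multiplying $\vert\autv(\D)\vert\cdot\frac{1}{\vert\aut(\B)\vert}\cdot\frac{1}{\vert\aut(\D)\vert}$ by the prefactor $\edge'/\edge$ gives $\edge'/(\edge\cdot\vert\aut(\C)\vert)$; the general case (arbitrary $\ext$, vertices possibly bare) is then recovered via the compatibility $\Dc^{\vertex,\lp,\ext+\ext'}=\extb_{\Ext',V}(\Dc^{\vertex,\lp,\ext})$ (Lemma~\ref{lem:facconn}) and the transfer argument of Lemma~\ref{lem:anyclass} (Lemma~10 of the cited earlier paper). Without this reduction, the coincidences you worry about (several isomorphic blocks attached at one cut vertex, automorphisms fixing the attachment vertex) are genuinely problematic, and your sketch gives no mechanism to control them; asserting that the factors ``conspire'' is not a proof.

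Two smaller inaccuracies. First, your induction is on $\vertex$; the paper inducts on the number of biconnected components $\comp$, which is the quantity the recursion actually decrements in the relevant sense --- your version can be made to work but you would still need the same block-contraction lemmas. Second, your statement that an edge inside $\hat{\graph}$ is ``counted inductively within the $\Dc^{\vertex-\vertex'+1,\lp-\lp',\ext}$ factor'' is off: the edges of the chosen block come from the $\Db^{\vertex',\lp',0}$ factor (whose class sums to $1/\vert\aut(\D)\vert$ by Theorem~\ref{thm:biconn}), not from the contracted connected factor; the induction hypothesis is applied only to the class $\B$ of $\contraction(\graphH)$. You would also need an existence statement (the analogue of Lemma~\ref{lem:allconn}) proved by the same contract-and-reinsert construction, which you gesture at but do not carry out.
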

%The maps $\ri^{\Db^{\vertex',\lp',0}}$ clearly produce graphs with positive coefficient from graphs with positive coefficient. Therefore, the coefficients of the graphs in $\Dc^{\vertex,\lp,\ext}$ are either positive or zero. 
\begin{proof}
Let $\graph$ denote any  connected graph  with $\edge\ge1$ internal edges. We proceed to show that an arbitrary biconnected component of the graph $\graph$ with $\edge'\ge1$ internal edges   adds $\edge'/(\edge\cdot \vert\aut(\graph)\vert)$ to the sum of the coefficients of all graphs isomorphic to the graph $\graph$. 
 As expected, we thus conclude that  every one of the $\edge$ internal edges of the graph $\graph$ contributes $1/(\edge\cdot \vert\aut(\graph)\vert)$ to that sum. 
\begin{lem}
\label{lem:allconn}
Fix integers $\ext\ge0$, $\lp\ge0$ and  $\vertex>1$.
 Let $\Dc^{\vertex,\lp,\ext}=\sum_{\graph\in \Vc^{\vertex,\lp,\ext}}\alpha_\graph\, \graph \in \Q\Vc^{\vertex,\lp,\ext}$ be defined by formula (\ref{eq:recconn}). Let  $\C\subseteq \Vc^{\vertex,\lp,\ext}$ denote an arbitrary equivalence class. Then, there exists $\graph\in\C$ so that $\alpha_\graph>0$. 
\end{lem}
\begin{proof}
The proof proceeds by induction on the number of biconnected components $\comp$. 
By Theorem \ref{thm:biconn}, the statement holds for all graphs in $\Dc^{\vertex,\lp,\ext}$ with only one biconnected component.
We assume the statement to  hold for graphs in $\Dc^{\vertex,\lp,\ext}$ with 
$\comp-1\ge1$ biconnected components. 
Let $\graph$ 
denote any graph in $\C\subseteq\Vc^{\vertex,\lp,\ext}$ with $\comp$ biconnected components. 
Let $\Db^{\vertex',\lp',0}=\sum_{\graph'\in\Vb^{\vertex',\lp',0}}\eta_{\graph'}\graph'$; $\eta_{\graph'}\in\Q$ be given by equation (\ref{eq:recbiconn}). Recall that  by equation (\ref{eq:Qgen}) the maps $\ri^{\Db^{\vertex',\lp',0}}$ read as 
\begin{equation}\label{eq:Qbiconn}
\ri^{\Db^{\vertex',\lp',0}}\defeq\sum_{\graph'\in\Vb^{\vertex',\lp',0}}\eta_{\graph'}\,\ri^{\graph'}\,.
\end{equation}   
We proceed to show that a graph isomorphic to $\graph$  is generated by applying  the maps  $\ri^{\Db^{\vertex',\lp',0}}$ to graphs with $\comp-1$ biconnected components occurring in $\Dc^{\vertex-\vertex'+1,\lp-\lp',\ext}=\sum_{\graph^*\in\Vc^{\vertex-\vertex'+1,\lp-\lp',\ext}}\beta_{\graph^*}\graph^*$; $\beta_{\graph^*}\in\Q$ with non-zero coefficient.
Let $\hat{\graph}$ be an arbitrary biconnected component of the graph $\graph$. Let $\hat{V} =\{v_{i_1},\ldots,v_{i_{\uu}}\}\subset V$ be its vertex set, where $i_1<\ldots<i_{\uu}$. Also, let $\lp'$ denote its cyclomatic number.
Contracting the graph $\hat{\graph}$  to the vertex $v_{i_1}$ yields a graph $\contraction(\graph)\in \Vc^{\vertex-\vertex'+1,\lp-\lp',\ext}$ with $\comp-1$ biconnected components. 
Let $\B$ denote the equivalence class containing the graph $\contraction(\graph)$. 
Let $\graph'\in\Vb^{\vertex',\lp',0}$ 
be a biconnected graph isomorphic to $\extc(\hat{\graph})$, where $\extc(\hat{\graph})$ is the graph obtained from $\hat{\graph}$ by erasing all the external edges. 
By induction assumption, there exists a graph in $\B$, say, $\graphH^*$,
so that $\graphH^*\cong\contraction(\graph)$ and $\beta_{\graphH^*}>0$. Let $v_j$ with $j\in\{1,\ldots,\vertex-\vertex'+1\}$ be the vertex of the graph $\graphH^*$ which is mapped to $v_{i_1}$ of $\contraction(\graph)$ by an isomorphism.  
Applying the map $\rj^{\graph'}$ to  
the graph ${\graphH^*}$ yields a linear combination of graphs, one of which, say, $\graphH$,
is isomorphic to $\graph$. That is, $\alpha_{\graphH}>0$ and $\graphH\cong\graph$.
\end{proof}
\begin{lem}
\label{lem:distconn}Fix integers  $\lp\ge0$,   $\vertex>1$ and $\ext\ge\vertex$. 
Let  $\C\subseteq \Vc^{\vertex,\lp,\ext}$ denote an equivalence class.  Let $\graph=(V,K,E)$; $E=\Int\cup\Ext$, 
together with the maps $\mapi$ and $\mape$ denote a graph in $\C$.  Assume that $V\cap\mape(\Ext)=V$.  Let $\Dc^{\vertex,\lp,\ext}=\sum_{\graph\in \Vc^{\vertex,\lp,\ext}}\alpha_\graph\, \graph \in \Q\Vc^{\vertex,\lp,\ext}$ be defined by formula (\ref{eq:recconn}). Then, $\sum_{\graph\in\C}\alpha_{\graph}=1/\vert\aut(\C)\vert$. 
\end{lem}
\begin{proof}
The proof proceeds by induction on the number of biconnected components $\comp$. 
By Theorem \ref{thm:biconn}, the statement holds for all graphs in $\Dc^{\vertex,\lp,\ext}$ with only one biconnected component.
We assume the statement to  hold for graphs in $\Dc^{\vertex,\lp,\ext}$ with 
$\comp-1\ge1$ biconnected components. 
Let the graph $\graph\in\C\subseteq\Vc^{\vertex,\lp,\ext}$ have $\comp$ biconnected components. Let $\edge=\lp+\vertex-1$ denote its internal edge number.
By Lemma~\ref{lem:allconn}, there exists a graph, say, $\graphH\in\C$ which occurs in $\Dc^{\vertex,\lp,\ext}$ with non-zero coefficient. That is, $\graphH\cong\graph$ and $\alpha_{\graphH}>0$.
 Moreover, the graph $\graphH\in\C$ is so that every one of its vertices has at least one (labeled) external edge. Hence,  $\vert\autv(\C)\vert=1$ so that
$\vert\aut(\C)\vert=\vert\aute(\C)\vert$. 
We proceed to show that
$\sum_{\graph\in\C}\alpha_{\graph}=1/\vert\aut(\C)\vert$. To this end, we check from which graphs
with $\comp-1$ biconnected components, the graphs in  the equivalence class   $\C$ are generated  by the recursion formula (\ref{eq:recconn}), and how many times they are generated.  

Choose any one of the $\comp$ biconnected components of the graph $\graphH\in\C$.
Let this be a graph, say, $\hat{\graph}$, with vertex set $\hat{V} =\{v_{i_1},\ldots,v_{i_{\uu}}\}\subseteq V$, where $i_1<\ldots<i_{\uu}$. Also, let  $\lp'$, $\edge'=\lp'+\uu-1$ and $\ext'\ge\uu$ denote its cyclomatic number, internal edge number and external edge number, respectively.
Moreover, let $\A$ denote the equivalence class containing  the graph $\hat{\graph}$. Since this is a subgraph of the graph $\graphH$, $\vert\aut(\A)\vert=\vert\aute(\A)\vert$. 
Contracting the graph $\hat{\graph}$  to the vertex $v_{i_1}$ yields a graph $\contraction(\graphH)\in \Vc^{\vertex-\vertex'+1,\lp-\lp',\ext}$ with $\comp-1$ biconnected components.  Let  $\B$ denote the equivalence class containing $\contraction(\graphH)$.
The graphs in $\B$ have no non-trivial vertex symmetries. Hence, the order of their automorphism group 
 is related to that of the graph $\graphH\in\C$ via
$$\vert\aut(\B)\vert=\frac{\vert\aut(\C)\vert}{\vert\aut(\A)\vert}\,.$$ 
Let $\Dc^{\vertex-\vertex'+1,\lp-\lp',\ext}=\sum_{\graph^*\in\Vc^{\vertex-\vertex'+1,\lp-\lp',\ext}}\beta_{\graph^*}\graph^*\,;\beta_{\graph^*}\in\Q$. 
By induction assumption,  $$\sum_{\graph^*\in\B}\beta_{\graph^*}=\frac{1}{\vert\aut(\B)\vert}\,.$$ 
Now, let $\Db^{\vertex',\lp',0}=\sum_{\graph'\in\Vb^{\vertex',\lp',0}}\eta_{\graph'}\graph'$; $\eta_{\graph'}\in\Q$. 
Let $\graph'\in\Vb^{\vertex',\lp',0}$ 
be a biconnected graph isomorphic to $\extc(\hat{\graph})$. Let $\D\subseteq\Vb^{\vertex',\lp',0}$ denote the equivalence class containing $\graph'$. The order of the automorphism group of the graph $\graph'$ is related to that of $\hat{\graph}$ via $$\vert\aut(\D)\vert=\vert\aut(\A)\vert \cdot \vert\autv(\D)\vert$$ for $\vert\aut(\A)\vert=\vert\aute(\A)\vert=\vert\aute(\D)\vert$.
By Lemma \ref{lem:allconn}, there exists a graph, say, $\graphH^*\in\B$ so that $\graphH^*\cong\contraction(\graphH)$ and $\beta_{\graphH^*}>0$.
Let $v_j$ with $j\in\{1,\ldots,\vertex-\vertex'+1\}$ be the vertex of the graph $\graphH^*$ which is mapped to $v_{i_1}$ of $\contraction(\graphH)$ by an isomorphism.  
Apply the map $\rj^{\graph'}$  to the graph $\graphH^*$. 
Notice that there are $\vert\autv(\D)\vert$ ways to distribute the $\ext'$ external edges assigned to the vertex $v_{j}$ of the graph $\graphH^*$ between all the vertices of the graph  $\shiftj(\graph')$ so as to obtain a graph in the equivalence class $\C\owns \graphH$.  Therefore, there are $\vert\autv(\D)\vert$  graphs in the linear combination 
$\rj^{\graph'}(\graphH^*)$ which are isomorphic to the graph $\graph$. 
Clearly, the map  $\rj^{\graph'}$  produces a graph isomorphic to $\graphH$ from the graph  $\graphH^*$  with coefficient $\alpha^*_{\graphH}=
\beta_{\graphH^*}\in\Q$. 
Now, 
formula (\ref{eq:recconn}) prescribes to  
apply the maps $\ri^{\graph'}$ to the vertex which is mapped to $v_{i_1}$ by an isomorphism  of every graph 
in the equivalence class $\B$  occurring 
in $\Dc^{\vertex-\vertex'+1,\lp-\lp',\ext}$ with non-zero coefficient. 
Therefore,
\begin{eqnarray*}
\sum_{\graph\in\C}\alpha^*_{\graph} & = &\vert\autv(\D)\vert\cdot\sum_{\graph^*\in\B}\beta_{\graph^*}\\
& = & \frac{\vert\autv(\D)\vert}{\vert\aut(\B)\vert}\\
& = & \frac{\vert\autv(\D)\vert\cdot \vert\aut(\A)\vert}{\vert\aut(\C)\vert}\\
& = & \frac{\vert\aut(\D)\vert}{\vert\aut(\C)\vert}\,,
\end{eqnarray*}
where the factor $\vert\autv(\D)\vert$ on the right hand side of the first equality, is due to the fact that every graph (with non-zero coefficient) in the equivalence class $\B$ generates $\vert\autv(\D)\vert$ graphs in $\C$.
Hence, according to Theorem \ref{thm:biconn} and formulas (\ref{eq:recconn}) and (\ref{eq:Qbiconn}), the contribution to $\sum_{\graph\in\C}\alpha_{\graph}$
 is $\edge'/ (\edge\cdot \vert\aut(\C)\vert)$. Distributing
this factor between the $\edge'$ internal edges of the graph $\hat{\graph}$ yields
$1/(\edge\cdot \vert\aut(\C)\vert)$ for each edge. Repeating the same consideration for every biconnected component of the graph $\graph$ yields that every edge of each biconnected component adds $1/(\edge\cdot \vert\aut(\C)\vert)$ to $\sum_{\graph\in\C}\alpha_{\graph}$.

\smallskip

We conclude that every one of the $\edge$ internal edges of the graph $\graph$  
contributes $1/(\edge\cdot \vert\aut(\C)\vert)$ to  $\sum_{\graph\in\C}\alpha_{\graph}$. Hence, the overall contribution  is exactly $1/\vert\aut(\C)\vert$. This
completes the proof.
\end{proof}
$\Dc^{\vertex,\lp,\ext}$ satisfies the following property.
\begin{lem}\label{lem:facconn}
Fix integers $\ext\ge0$, $\lp\ge0$ and  $\vertex>1$. 
Let $\Dc^{\vertex,\lp,\ext}=\sum_{\graph\in \Vc^{\vertex,\lp,\ext}}\alpha_\graph\, \graph \in \Q\Vc^{\vertex,\lp,\ext}$ be defined by formula (\ref{eq:recconn}). 
Moreover, let $K'\subset\mathcal{K}$ be a finite set so that $K\cap K'=\emptyset$.  Let $\Ext'\subseteq [K']^\two$; \emph{$\ext'\defeq\card(\Ext')$}. Assume that  the elements of $\Ext'$ satisfy  $\{\legs_a,\legs_{a'}\}\cap\{\legs_{b},\legs_{b'}\}=\emptyset$.  Also, let  $L'=\{x_{\ext+1},\ldots,x_{\ext+\ext'}\}$ be a label set so that  $L\cap L'=\emptyset$. 
Let  $l':\Ext'\rightarrow [K',L']$ be a labeling of the elements of $\Ext'$. 
 Then,  $\Dc^{\vertex,\lp,\ext+\ext'}=\extb_{\Ext',V}(\Dc^{\vertex,\lp,\ext})$. 
\end{lem}
\begin{proof}
Let $V^*=\{v_i, v_{\vertex-\vertex'+2},\ldots,v_{\vertex}\}\subseteq V$ be the vertex set of all graphs in $\shift(\Db^{\vertex',\lp',0})$; $i\in\{1,\ldots,\vertex-\vertex'+1\}$.
Clearly, $\extb_{\Ext',V}=\sum_{\edgese\subseteq\Ext'}\extb_{\Ext'\backslash\edgese,V\backslash V^*}\circ\extb_{\edgese,V^*}:\Q \Vc^{\vertex,\lp,\ext}\to\Q \Vc^{\vertex,\lp,\ext+\ext'}$. 
Furthermore, $\extb_{\edgese,V^*}\circ\ri^{\Db^{\vertex',\lp',0}}=\ri^{\Db^{\vertex',\lp',0}}\circ\extb_{\edgese,\{v_i\}}:\Q \Vc^{\vertex-\vertex'+1,\lp-\lp',\ext}\rightarrow\Q \Vc^{\vertex,\lp,\ext+\ext^*}$, where $\ext^*=\card(\edgese)$. 
Therefore, the equality $\Dc^{\vertex,\lp,\ext+\ext'}=\extb_{\Ext',V}(\Dc^{\vertex,\lp,\ext})$ follows immediately from the recursive definition (\ref{eq:recconn}).
 \end{proof}

\begin{lem}
\label{lem:anyclass} Fix integers $\ext\ge0$, $\lp\ge0$ and  $\vertex>1$. 
 Let  $\C\subseteq \Vc^{\vertex,\lp,\ext}$ denote an arbitrary equivalence class.
   Let $\Dc^{\vertex,\lp,\ext}=\sum_{\graph\in \Vc^{\vertex,\lp,\ext}}\alpha_\graph\, \graph \in \Q\Vc^{\vertex,\lp,\ext}$ be defined by formula (\ref{eq:recconn}). Then, $\sum_{\graph\in\C}\alpha_{\graph}=1/\vert\aut(\C)\vert$. 
\end{lem}
\begin{proof}
The proof is the same as that of Lemma 10 of \cite{Me:classes} (see also  Lemma 10 and Theorem 10 of \cite{MeOe:npoint} and \cite{MeOe:loop}, respectively). 
\end{proof} 
This completes the proof of Theorem \ref{thm:conn}.
\end{proof}

\subsubsection{Examples}\label{sec:appconn}
The present section overlaps Section 5.3.3 of \cite{Me:classes}. We show the result of computing all mutually non-isomorphic connected graphs without external edges as  contributions to $\Dc^{\vertex,\lp,0}$ via formula  (\ref{eq:recconn}) up to order $2\le\vertex+\lp\le5$.
The coefficients in front  of graphs are the
inverses of the orders of their
groups of automorphisms.
\vspace{1cm}\\
\includegraphics{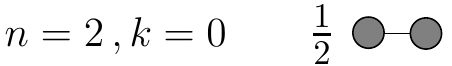}
\vspace{1cm}\\
\includegraphics{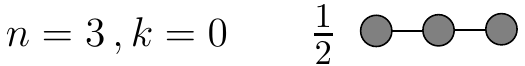}
\vspace{1cm}\\
\includegraphics{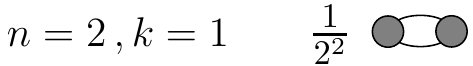}
\vspace{1cm}\\
\includegraphics{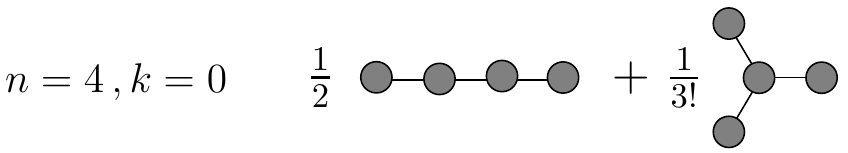}
\vspace{1cm}\\
\includegraphics{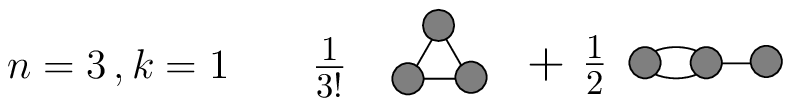}
\vspace{1cm}\\
\includegraphics{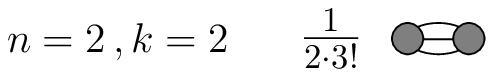}
\vspace{1cm}\\
\includegraphics{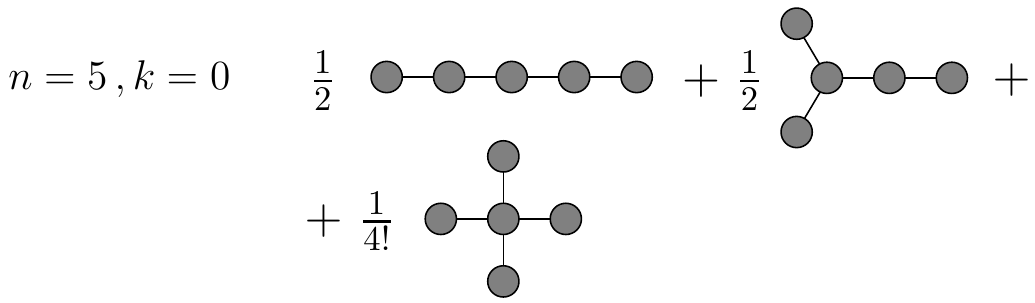}
\vspace{1cm}\\
\includegraphics{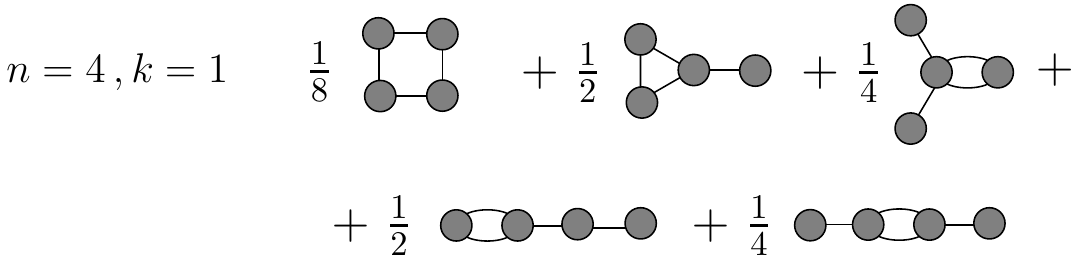}
\vspace{1cm}\\
\includegraphics{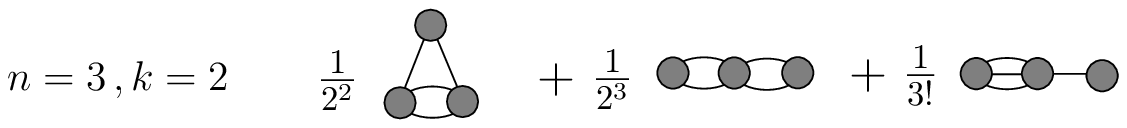}
\vspace{1cm}\\
\includegraphics{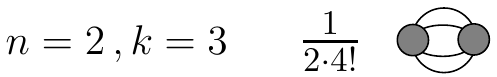}

\subsection{2-edge connected graphs}
\label{sec:edgeconn}
By Lemma \ref{lem:edgeconntoedgeconn}, Theorem \ref{thm:conn} generalizes straightforwardly to $\two$-edge connected graphs. 
\begin{thm}\label{thm:edgeconn}
Fix an integer $\ext\ge0$.   
For all integers $\lp>0$ and $\vertex> 1$,
define $\De^{\vertex,\lp,\ext} \in \Q\Ve^{\vertex,\lp,\ext}$
by the following recursion relation:
\begin{itemize}
\item
$\De^{\two,\lp,\ext}\defeq\Db^{\two,\lp,\ext}$; 
\item
\begin{eqnarray}\nonumber
\lefteqn{\De^{\vertex,\lp,\ext}\defeq
\Db^{\vertex,\lp,\ext}+\frac{1}{\lp+\vertex-1}\cdot}\\\label{eq:recedgeconn}
&&\sum_{\lp'=1}^{\lp-1}\sum_{\vertex'=\two}^{\vertex-1}\sum_{i=1}^{\vertex-\vertex'+1}\biggl((\lp'+\vertex'-1)\ri^{\Db^{\vertex',\lp',0}}(\De^{\vertex-\vertex'+1,\lp-\lp',\ext})\biggr), \vertex>\two\,.
 \end{eqnarray}
\end{itemize}
Then, for fixed values of  $\vertex$ and $\lp$, $\De^{\vertex,\lp,\ext}=\sum_{\graph\in \Ve^{\vertex,\lp,\ext}}\alpha_\graph\, \graph$;  $\alpha_\graph\in\Q$  for all $\graph\in\Ve^{\vertex,\lp,\ext}$. Moreover, given an arbitrary equivalence class $\C\subseteq\Ve^{\vertex,\lp,\ext}$, the following holds: (i) There exists $\graph\in\C$ so that $\alpha_\graph>0$; (ii) $\sum_{\graph\in\C}\alpha_\graph=1/\vert\aut(\C)\vert$. 
\end{thm}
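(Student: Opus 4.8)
The plan is to mirror almost verbatim the proof of Theorem \ref{thm:conn}, replacing "connected" by "$\two$-edge connected" throughout, and to justify each of the three places where the argument needs a $\two$-edge-connectivity-specific input. First I would observe that the recursion (\ref{eq:recedgeconn}) differs from (\ref{eq:recconn}) only in the range of the $\lp'$-summation, which now runs from $1$ to $\lp-1$ rather than from $0$ to $\lp$; the reason is that in a $\two$-edge connected graph every biconnected component has cyclomatic number at least $1$ (a bridge would violate $\two$-edge-connectivity), so the only admissible "glued-on" biconnected pieces $\Db^{\vertex',\lp',0}$ are those with $\lp'\ge1$, and correspondingly the residual graph $\De^{\vertex-\vertex'+1,\lp-\lp',\ext}$ must itself have cyclomatic number $\lp-\lp'\ge1$, forcing $\lp'\le\lp-1$. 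By Lemma \ref{lem:edgeconntoedgeconn}, $\ri^{\Db^{\vertex',\lp',0}}$ maps $\Q\Ve$ into $\Q\Ve$, so $\De^{\vertex,\lp,\ext}$ indeed lies in $\Q\Ve^{\vertex,\lp,\ext}$; that the coefficients are rational is immediate from the recursion. Thus the statement "$\De^{\vertex,\lp,\ext}=\sum_{\graph\in\Ve^{\vertex,\lp,\ext}}\alpha_\graph\,\graph$" is clear, and it remains to prove (i) and (ii).

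For (i) and (ii) I would prove the analogues of Lemmas \ref{lem:allconn}, \ref{lem:distconn}, \ref{lem:facconn} and \ref{lem:anyclass} with $\Vc$ replaced by $\Ve$ and $\Dc$ by $\De$, by induction on the number $\comp$ of biconnected components. The base case $\comp=1$ is Theorem \ref{thm:biconn}. For the inductive step, given a $\two$-edge connected graph $\graph$ with $\comp$ biconnected components and an arbitrary biconnected component $\hat\graph$ with vertex set $\hat V=\{v_{i_1},\dots,v_{i_{\uu}}\}$ and cyclomatic number $\lp'\ge1$, I would contract $\hat\graph$ to $v_{i_1}$ via $\contraction$ to obtain $\contraction(\graph)\in\Ve^{\vertex-\vertex'+1,\lp-\lp',\ext}$; the key point here is that contracting a biconnected component of a $\two$-edge connected graph leaves a $\two$-edge connected graph (collapsing a $\two$-connected piece to a point destroys no bridge and creates none), and since $\hat\graph$ has $\lp'\ge1$ the residual graph lies in the range of the $\lp'$-summation. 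Then, exactly as in Lemma \ref{lem:distconn}, I would count: there are $\vert\autv(\D)\vert$ ways to redistribute the external edges at the contraction vertex so as to land in the class $\C$, the induction hypothesis gives $\sum_{\graph^*\in\B}\beta_{\graph^*}=1/\vert\aut(\B)\vert$, the automorphism-order relations $\vert\aut(\B)\vert=\vert\aut(\C)\vert/\vert\aut(\A)\vert$ and $\vert\aut(\D)\vert=\vert\aut(\A)\vert\cdot\vert\autv(\D)\vert$ hold for the same reasons as there, and the telescoping computation yields that $\hat\graph$'s $\edge'=\lp'+\uu-1$ edges together contribute $\edge'/(\edge\cdot\vert\aut(\C)\vert)$; summing over all biconnected components of $\graph$ accounts for all $\edge$ internal edges and gives the total $1/\vert\aut(\C)\vert$. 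Finally the passage from the "all vertices carry a label" case (Lemma \ref{lem:distconn} analogue) to an arbitrary class (Lemma \ref{lem:anyclass} analogue) goes through the factorization property $\De^{\vertex,\lp,\ext+\ext'}=\extb_{\Ext',V}(\De^{\vertex,\lp,\ext})$ proved as in Lemma \ref{lem:facconn}, combined with the argument of Lemma 10 of \cite{Me:classes}.

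The main obstacle I anticipate is not any single hard computation but rather verifying closure: one must be certain that every operation invoked in the recursion keeps us inside the class of $\two$-edge connected graphs \emph{and} that, conversely, every $\two$-edge connected graph arises. The "keeps us inside" direction is Lemma \ref{lem:edgeconntoedgeconn} together with the observation that $\contraction$ preserves $\two$-edge-connectivity. The "every graph arises" direction is the delicate part: I must confirm that for a $\two$-edge connected $\graph$, peeling off one biconnected component $\hat\graph$ always leaves a graph that is again $\two$-edge connected (so that the induction hypothesis applies) — this is where the restriction $\lp'\ge1$ on the summation in (\ref{eq:recedgeconn}) is exactly what is needed, since a biconnected component with $\lp'=0$ is a single edge, i.e.\ a bridge, which cannot occur in a $\two$-edge connected graph, so no such term is ever missing. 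Once this bookkeeping is pinned down, the arithmetic of coefficients is identical to the connected case and the proof closes.

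This completes the proof of Theorem \ref{thm:edgeconn}.
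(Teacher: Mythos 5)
Your proposal is correct and follows essentially the same route as the paper, which proves Theorem~\ref{thm:edgeconn} simply by invoking Lemma~\ref{lem:edgeconntoedgeconn} to transfer the proof of Theorem~\ref{thm:conn} (the restricted range $1\le\lp'\le\lp-1$ reflecting the absence of bridges, and contraction of a biconnected component preserving $\two$-edge-connectivity, exactly as you argue). You spell out the closure and ``every graph arises'' bookkeeping in more detail than the paper does, but the underlying argument is the same.
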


\subsubsection{Examples}\label{sec:appedgeconn}
We show the result of computing all mutually non-isomorphic $\two$-edge connected graphs without external edges as  contributions to $\De^{\vertex,\lp,0}$ via formula  (\ref{eq:recedgeconn}) up to order $3\leq\vertex+\lp\le6$. The coefficients in front  of graphs are the
inverses of the orders of their groups of automorphisms. 
\vspace{1cm}\\
\includegraphics{figures/noloopsn2k1}
\vspace{1cm}\\
\includegraphics{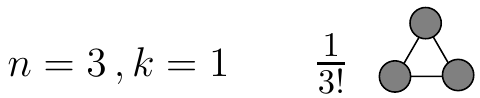}
\vspace{1cm}\\
\includegraphics{figures/noloopsn2k2}
\vspace{1cm}\\
\includegraphics{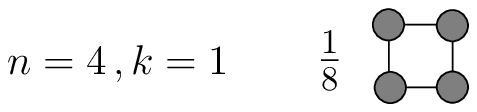}
\vspace{1cm}\\
\includegraphics{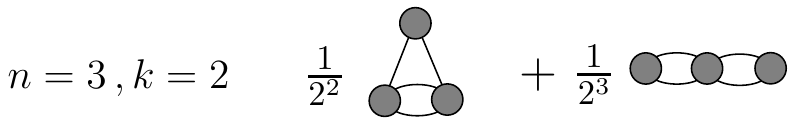}
\vspace{1cm}\\
\includegraphics{figures/noloopsn2k3}
\vspace{1cm}\\
\includegraphics{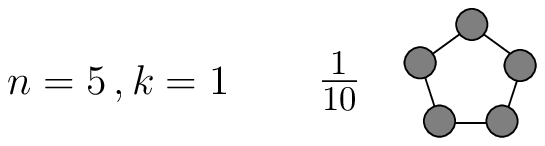}
\vspace{1cm}\\
\includegraphics{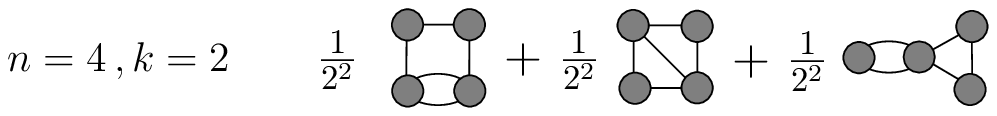}
\vspace{1cm}\\
\includegraphics{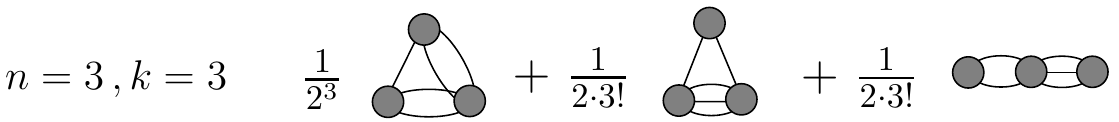}
\vspace{1cm}\\
\includegraphics{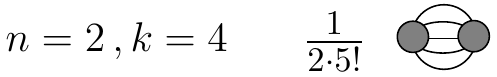}

\subsubsection{Algorithmic considerations}
\label{sec:algo}
We briefly discuss some of the algorithmic implications of the recursive definition (\ref{eq:recedgeconn}). In the present section,  only graphs without external edges are considered for these may be added via the maps $\extb_{\Ext,V}$. 

\medskip

An important algorithmic aspect is to determine \emph{a priori} the nature of the biconnected components of the $\two$-edge connected graphs generated by formula (\ref{eq:recedgeconn}).
 In this context, the most straightforward simplification is to restrict the formula to graphs whose biconnected components are cycles. Let $C_{\vertex}$ denote a cycle with $\vertex$ vertices. Clearly, from formula (\ref{eq:recbiconn}) $\Db^{\vertex,1,0}=1/(\two\vertex)C_{\vertex}$, where for simplicity all graphs in the same equivalence class are identified as the same. Therefore, formula (\ref{eq:recedgeconn}) specializes to $\two$-edge connected graphs with the aforesaid property as follows: 
\begin{eqnarray*}
\Dec^{\two,1,0}&\defeq&\frac{1}{\two^\two}  C_\two\,;\\
\Dec^{\vertex,\lp,0}&\defeq&
\frac{1}{\two\vertex}C_\vertex+\frac{1}{\two(\lp+\vertex-1)}
\sum_{\vertex'=\two}^{\vertex-1}\sum_{i=1}^{\vertex-\vertex'+1}\ri^{C_{\vertex'}}(\Dec^{\vertex-\vertex'+1,\lp-1,0}), \vertex>\two\,,
 \end{eqnarray*}
where $\Dec^{\vertex,\lp,0}\in\Q\Ve^{\vertex,\lp}$ denotes the linear combination of all $\two$-edge connected graphs with $\vertex$ vertices and cyclomatic number $\lp$ so that every biconnected component is a cycle. In addition, suppose that one is only interested in calculating all $\two$-edge connected graphs  whose biconnected components have a minimum vertex  or cyclomatic number, say, $\vmin$ and $\lmin$, respectively. This is clearly obtained by changing the lower and upper limits of the sums over $\vertex'$ and $\lp'$ in formula (\ref{eq:recedgeconn}) to $\vmin$ and $\vertex-\vmin+1$ or $\lmin$ and $\lp$-$\lmin$, respectively.

\section*{Acknowledgements}
The author would like to thank Brigitte Hiller and Christian Brouder  for 
careful reading of the manuscript.
The research was supported through the fellowship SFRH/BPD/48223/2008 provided by the Portuguese Science and Technology Foundation.

\bibliographystyle{abbrv}
\bibliography{bib1001a}

\end{document}